\documentclass[a4paper,11pt,reqno]{amsart}

\usepackage{fullpage}


\makeatletter \@addtoreset{equation}{section}

\makeatother \makeatletter

\usepackage[latin1]{inputenc}
\usepackage[english]{babel}
\usepackage{amsmath,amssymb,amsfonts,amsthm,mathrsfs,upgreek,accents}
\usepackage{url}
\usepackage{mathtools}
\usepackage{color}
\usepackage[colorlinks=true,linkcolor=blue,citecolor=blue,urlcolor=blue,breaklinks]{hyperref}
\usepackage{graphicx,color}
\usepackage{tikz}
\usepackage{pinlabel}

\usepackage{booktabs}
\usepackage{textcomp}
\usepackage{subfigure}

\numberwithin{equation}{section}

\theoremstyle{plain}
\begingroup
\newtheorem{theorem}{Theorem}[section]
\newtheorem{prop}[theorem]{Proposition}
\newtheorem{cor}[theorem]{Corollary}
\newtheorem{lemma}[theorem]{Lemma}
\endgroup

\theoremstyle{definition}
\begingroup

\newtheorem{remark}[theorem]{Remark}
\endgroup

\newcommand{\R}{\mathbb R}

\newcommand{\C}{\mathbb C}

\def\P{\mathcal P}

\DeclareMathOperator\supp{supp}

\def\div{\mathop{\mathrm{div}}}

\newcommand{\mres}{\mathbin{\vrule height 1.6ex depth 0pt width
0.13ex\vrule height 0.13ex depth 0pt width 1.3ex}}

\def\weakto{\rightharpoonup}



 \newcommand{\Rd}{\mathbb{R}^{d}}
 \newcommand{\Par}{\mathcal P}
 \newcommand{\ird}{\int_{\Rd}}


\title[The ellipse law]{The ellipse law: Kirchhoff meets dislocations}

\author[J.~A. Carrillo]{J.~A. Carrillo}
\author[J. Mateu]{J. Mateu}
\author[M.~G. Mora]{M.~G. Mora}
\author[L. Rondi]{L. Rondi}
\author[L. Scardia]{L. Scardia}
\author[J. Verdera]{J. Verdera}

\address[J.A. Carrillo]{Department of Mathematics, Imperial College London, United Kingdom}
\email{carrillo@imperial.ac.uk}

\address[J. Mateu]{Department de Matem\`atiques, Universitat Aut\`onoma de Barcelona, Catalonia}
\email{mateu@mat.uab.cat}

\address[M.G. Mora]{Dipartimento di Matematica, Universit\`a di Pavia, Italy}
\email{mariagiovanna.mora@unipv.it}

\address[L. Rondi]{Dipartimento di Matematica e Geoscienze, Universit\`a di Trieste, Italy}
\email{rondi@units.it}

\address[L. Scardia]{Department of Mathematical Sciences, University of Bath, United Kingdom}
\email{L.Scardia@bath.ac.uk}

\address[J. Verdera]{Department de Matem\`atiques, Universitat Aut\`onoma de Barcelona, Catalonia}
\email{jvm@mat.uab.cat}

\begin{document}

\begin{abstract}
In this paper we consider a nonlocal energy $I_\alpha$ whose
kernel is obtained by adding to the Coulomb potential an anisotropic
term weighted by a parameter $\alpha\in \R$. The
case $\alpha=0$ corresponds to purely logarithmic interactions,
minimised by the celebrated circle law for a quadratic confinement;
$\alpha=1$ corresponds to the energy of interacting dislocations,
minimised by the semi-circle law. We show that for $\alpha\in (0,1)$
the minimiser can be computed explicitly and is the normalised
characteristic function of the domain enclosed by an \emph{ellipse}.
To prove our result we borrow techniques from fluid dynamics, in particular those related to Kirchhoff's celebrated result that domains enclosed by ellipses are rotating vortex patches, called \emph{Kirchhoff ellipses}. Therefore we show a surprising connection between vortices and dislocations.

\bigskip

\noindent\textbf{AMS 2010 Mathematics Subject Classification:}  31A15 (primary); 49K20 (secondary)

\medskip

\noindent \textbf{Keywords:} nonlocal interaction, potential theory, dislocations, Kirchhoff ellipses
\end{abstract}

\maketitle

\begin{section}{Introduction}
The starting point of our analysis is the nonlocal energy
\begin{equation}\label{ce}
I_{\alpha}(\mu) =\frac12 \iint_{\mathbb{R}^2\times \mathbb{R}^2}
W_{\alpha}(x-y) \,d\mu(x) \,d\mu(y) + \frac12\int_{\mathbb{R}^2}
|x|^2  \,d\mu(x)
\end{equation}
defined on probability measures $\mu\in \mathcal{P}(\R^2)$, where
the interaction potential $W_{\alpha}$ is given by
\begin{equation}\label{V:int0}
W_{\alpha}(x_1,x_2) = -\frac12 \log(x_1^2+x_2^2) +
\alpha\frac{x_1^2}{x_1^2+x_2^2}, \qquad x=(x_1,x_2)\in\R^2\,,
\end{equation}
and $\alpha\in\R$. Here the parameter $\alpha$ has the role of
\emph{tuning} the strength of the anisotropic component of
$W_\alpha$, making it more or less prominent.

In the particular case where the anisotropy is \emph{switched off},
namely for $\alpha=0$, the minimiser is radial, and is given by the
celebrated circle law $\mu_0:=\frac{1}\pi \chi_{B_1(0)}$,  the
normalised characteristic function of the unit disc. This result is
now classical and has been proved in a variety of contexts, from
Fekete sets to orthogonal polynomials, from random matrices to
Ginzburg-Landau vortices and Coulomb gases (see, e.g., \cite{Fro,
SaTo}, and the references therein).

In the case $\alpha=1$, the energy $I_1$ models interactions between
edge dislocations of the same sign (see, e.g., \cite{MPS, GPPS1}).
The minimisers of $I_1$ were since long conjectured to be vertical
walls of dislocations, and this has been confirmed only very
recently, in \cite{MRS}, where the authors proved that the only
minimiser of $I_1$ is the semi-circle law
\begin{equation}\label{semicirclaw}
\mu_{1} :=  \frac{1}{\pi}\delta_0\otimes \sqrt{2-x_2^2} \,
\mathcal{H}^1\mres(-\sqrt2,\sqrt2)
\end{equation}
on the vertical axis.

In this paper we explicitly characterise the minimiser of $I_\alpha$
for every $\alpha\in\R$. In particular, it turns out that the values
$\alpha=\pm1$ correspond to \emph{maximal anisotropy}. Increasing
the value of the weight $\alpha$ above $1$ has in fact no effect: a
simple energy comparison argument shows that $\mu_1$ is the only
minimiser of $I_\alpha$ for $\alpha\geq 1$. Moreover, the case
$\alpha<0$ can be recovered from the knowledge of the case
$\alpha>0$ by switching $x_1$ and $x_2$, so we can limit our
analysis to $\alpha\in(0,1)$.

For $\alpha\in (0,1)$ we prove that the unique minimiser of $I_\alpha$ is the normalised
characteristic function of the region surrounded by an ellipse of semi-axes $\sqrt{1-\alpha}$ and $\sqrt{1+\alpha}$.
 This shows, in particular, that $\alpha=1$ is a \emph{critical} value of the parameter, at which an abrupt change in the
 dimension of the support of the minimiser occurs.
The main result of the paper is the following:

\begin{theorem}\label{thm:chara}
Let $0\leq \alpha<1$. The measure
\begin{equation}\label{eq:sc-intro}
\mu_{\alpha} :=
\frac{1}{\sqrt{1-\alpha^2}\pi}\chi_{\Omega(\sqrt{1-\alpha},\sqrt{1+\alpha})},
\end{equation}
where
\begin{equation*}
\Omega(\sqrt{1-\alpha},\sqrt{1+\alpha}):=\left\{x=(x_1,x_2)\in\R^2:\
\frac{x_1^2}{1-\alpha}+\frac{x_2^2}{1+\alpha}<1\right\},
\end{equation*}
is the unique minimiser of the functional $I_\alpha$ among
probability measures $\P(\R^2),$ and satisfies the Euler-Lagrange
conditions
\begin{align}\label{EL-1-intro}
&(W_{\alpha}\ast \mu_{\alpha})(x) + \frac{|x|^2}2 = C_\alpha \quad
\text{for every } x\in \Omega(\sqrt{1-\alpha},\sqrt{1+\alpha}),
\\ \label{EL-2-intro}
&(W_{\alpha}\ast \mu_{\alpha})(x) + \frac{|x|^2}2 \geq C_\alpha
\quad \text{for every }x\in \R^2,
\end{align}
with
\begin{align*}
C_\alpha&= 2I_\alpha(\mu_\alpha)- \frac12\int_{\mathbb{R}^2} |x|^2 \,d\mu_\alpha(x)=\frac12- \log\Big(\frac{\sqrt{1-\alpha}+\sqrt{1+\alpha}}2\Big) + \alpha\, \frac{\sqrt{1-\alpha}}{\sqrt{1-\alpha}+\sqrt{1+\alpha}}\,.
\end{align*}
\end{theorem}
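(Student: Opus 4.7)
\medskip
\noindent\textbf{Proof plan.}
The plan is to reduce the theorem to verifying the Euler-Lagrange conditions \eqref{EL-1-intro}--\eqref{EL-2-intro} for the explicit candidate $\mu_\alpha$, and then to carry out this verification using the special structure of the potentials of uniformly filled ellipses. First I would establish that the minimiser of $I_\alpha$ exists, is unique, and is fully characterised by \eqref{EL-1-intro}--\eqref{EL-2-intro}: existence follows from the direct method (the confinement $|x|^2/2$ gives tightness and weak-$\ast$ lower semicontinuity is preserved by the bounded anisotropic perturbation $\alpha x_1^2/|x|^2$); uniqueness follows from strict convexity of $I_\alpha$, since the logarithmic interaction is strictly positive definite on zero-mean signed measures and the bounded anisotropic piece does not destroy this property; and a standard Lagrange-multiplier argument based on convexity shows that any probability measure satisfying \eqref{EL-1-intro}--\eqref{EL-2-intro} is necessarily the unique minimiser. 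Thus the remainder of the proof is the verification of these two conditions for $\mu_\alpha$.

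Inside the ellipse $E := \Omega(\sqrt{1-\alpha},\sqrt{1+\alpha})$, splitting $W_\alpha = -\log|\cdot| + \alpha V$ with $V(x) = x_1^2/|x|^2$ reduces the calculation to two ingredients. The logarithmic piece is the classical Newtonian potential of a uniform ellipse: solving Poisson's equation in $E$ with matching at $\partial E$ gives
\[
-\log|\cdot|\ast \mu_\alpha(x) \;=\; \tfrac12 - \log\tfrac{a+b}{2} - \tfrac{x_1^2}{a(a+b)} - \tfrac{x_2^2}{b(a+b)},\qquad a:=\sqrt{1-\alpha},\ b:=\sqrt{1+\alpha}.
\]
The anisotropic piece is where Kirchhoff enters: since $V$ is homogeneous of degree $0$ and the second partial derivatives $\partial_i\partial_j V$ are mean-zero on circles (hence principal-value Calder\'on--Zygmund kernels), and since the Calder\'on--Zygmund transforms of $\chi_E$ are \emph{constant} inside $E$ for any ellipse $E$ (the classical rigidity underlying Kirchhoff's rotating vortex patch), the convolution $V\ast\mu_\alpha$ is itself a quadratic polynomial on $E$. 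Its precise coefficients can be read off from the Cauchy/Beurling transform of $\chi_E$, which is explicit via the Schwarz function of $\partial E$ (a quadratic algebraic function with branch cut on the focal segment).

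Summing the two contributions together with the confinement $|x|^2/2$ produces a quadratic polynomial on $E$. The requirement that the coefficients of $x_1^2$ and $x_2^2$ both vanish gives two scalar relations that are precisely satisfied by the choice $a=\sqrt{1-\alpha}$, $b=\sqrt{1+\alpha}$ --- this is the algebraic manifestation of the ellipse law. Evaluating the remaining constant at $x=0$ yields the formula for $C_\alpha$. For the inequality \eqref{EL-2-intro} on $\mathbb{R}^2\setminus\bar E$, I would extend the same explicit computation to the exterior: the Newtonian potential of $E$ and the Cauchy transform of $\chi_E$ admit closed-form expressions outside $E$ via the Joukowski conformal map from $\{|\zeta|>1\}$ onto $\mathbb{R}^2\setminus\bar E$. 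Pulling the inequality back to the exterior of the unit disc reduces it to a one-variable inequality that can be verified using the boundary value ($U=0$ on $\partial E$), the behaviour at infinity ($U\to+\infty$ from $|x|^2/2$), and a critical-point analysis.

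The main obstacle I expect is the anisotropic interior calculation: identifying $V\ast\mu_\alpha$ as a quadratic polynomial inside $E$ and extracting its coefficients. This is the Kirchhoff-type input that makes the entire computation work and selects the specific semi-axes; it relies on non-trivial complex-analytic machinery (Cauchy/Beurling transforms on an ellipse via the Schwarz function) that has no counterpart in the purely logarithmic case. A secondary difficulty is the exterior inequality \eqref{EL-2-intro}: while the formulas are explicit, verifying global non-negativity on $\mathbb{R}^2\setminus\bar E$ (rather than just near $\partial E$ or at infinity) requires careful analysis, since a direct maximum-principle argument is not available due to the sign-indefinite contribution of $\Delta V$.
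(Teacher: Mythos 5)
Your overall strategy coincides with the paper's: show the Euler--Lagrange conditions are sufficient by strict convexity, then verify them for $\mu_\alpha$ using the special complex-analytic structure of ellipse potentials. The interior part of your plan is essentially the paper's argument (the potential of a uniform ellipse is a quadratic polynomial inside, and matching the $x_1^2$, $x_2^2$ coefficients forces $a=\sqrt{1-\alpha}$, $b=\sqrt{1+\alpha}$, with $C_\alpha$ read off from the constant term); note, however, that the constancy inside $\Omega(a,b)$ of the second-order transform relevant to $x_1^2/|x|^2$ is not the classical Kirchhoff fact (which concerns the Cauchy transform) but precisely what must be proved -- the paper does it either by computing $(z/\bar z^2)\ast\chi_{\Omega(a,b)}$ through a Liouville argument, or by differentiating the known logarithmic potential with respect to the aspect ratio. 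Since you flag the Schwarz-function route, this is an acknowledged and fillable step. A more serious slip is your convexity argument: "the bounded anisotropic piece does not destroy positive definiteness" is false as a general principle, and here strict convexity holds exactly because $|\alpha|\le 1$: on zero-mean test functions $\hat W_\alpha$ is proportional to $\big((1-\alpha)\xi_1^2+(1+\alpha)\xi_2^2\big)/|\xi|^4$, which loses its sign for $\alpha>1$ (where indeed the minimiser degenerates to the semi-circle law). So uniqueness requires the explicit Fourier computation, not a perturbation remark.

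The genuine gap is the exterior inequality \eqref{EL-2-intro}. Your plan -- closed formulas outside via the Joukowski map, "reduce to a one-variable inequality", then a critical-point analysis -- is not yet an argument: after pulling back you still face a two-variable inequality for a complicated expression, and knowing $U=0$ on $\partial\Omega(a,b)$ and $U\to+\infty$ at infinity does not control $U$ in between without some monotonicity or maximum-principle structure, which you yourself observe is unavailable. This is where the paper's real ideas enter, and they are absent from your proposal: (i) replace the potential inequality by the radial-derivative inequality $x\cdot\nabla(W_\alpha\ast\mu_\alpha)(x)+|x|^2\ge 0$ for all $x$, i.e.\ monotonicity of $W_\alpha\ast\mu_\alpha+|x|^2/2$ along rays from the origin, which together with the interior identity \eqref{EL-1-intro} gives \eqref{EL-2-intro}; (ii) using the explicit exterior gradient, the quantity to be signed factorizes as $\frac{|z^2+2\alpha|+|z|^2-2}{2|z^2+2\alpha|}\,\Re\big(\bar z\sqrt{z^2+2\alpha}\big)$; (iii) the branch of $\sqrt{z^2+2\alpha}$ preserving quadrants gives $\Re\big(\bar z\sqrt{z^2+2\alpha}\big)\ge 0$ off the focal segment; and (iv) $|z^2|+|z^2+2\alpha|\ge 2$ outside the ellipse because $|z^2|+|z^2+c^2|$ is a confocal-ellipse level function (sum of distances of $z^2$ to the foci $0$ and $-c^2$). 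Without these steps -- or a genuine substitute for them -- the exterior verification in your proposal does not go through as described.
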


\smallskip

We emphasise that for $0\leq \alpha\leq 1$ the Euler-Lagrange
conditions \eqref{EL-1-intro}--\eqref{EL-2-intro} are a sufficient
condition to minimality, since we will show that the energy
$I_\alpha$ is strictly convex for these values of $\alpha$ (see
Proposition~\ref{exist+uniq}). Thus, proving that $\mu_\alpha$
satisfies \eqref{EL-1-intro}--\eqref{EL-2-intro} immediately entails
the minimality of $\mu_\alpha$.

\subsection{Kirchhoff ellipses and dislocations} To prove that the ellipse law $\mu_\alpha$ satisfies the Euler-Lagrange conditions  \eqref{EL-1-intro}--\eqref{EL-2-intro}, we evaluate the convolution of the kernel $W_\alpha$ with the characteristic function of the domain enclosed by a general ellipse. Let us define, for any $a,b>0$, the domain
\begin{equation*}
\Omega(a,b):=\left\{x=(x_1,x_2)\in\R^2:\
\frac{x_1^2}{a^2}+\frac{x_2^2}{b^2}<1\right\},
\end{equation*}
which is the region surrounded by an ellipse centred at the origin
with horizontal semi-axis $a$ and vertical semi-axis $b$. As a first step, we compute \emph{explicitly} the gradient of $W_\alpha\ast\chi_{\Omega(a,b)}$, both inside and outside
$\Omega(a,b)$; see  equations \eqref{gradinside}--\eqref{gradoutside} in
Proposition~\ref{gradientcomp}. As we shall see, this is enough to
conclude the proof of Theorem~\ref{thm:chara}, but for completeness
we shall also explicitly compute 
$W_\alpha\ast\chi_{\Omega(a,b)}$ in the whole plane (see Remark~\ref{rmk:pot-computation}). 
The gradient of $W_\alpha\ast\chi_{\Omega(a,b)}$ is the sum of $-(1/z)\ast\chi_{\Omega(a,b)}$, 
where $z=x_1+i x_2$ is the complex variable in the plane,
and of a second term containing the gradient of the anisotropic part of the potential. The convolution
$-(1/z)\ast\chi_{\Omega(a,b)}$ has been computed before, for instance in \cite{Joans}, for rotating vortex patches in fluid dynamics. 

Let us recall that
a vortex patch is the solution of the vorticity form of the planar Euler equations
in which the initial condition is the characteristic function of a bounded domain $D_0$. 
Since vorticity is transported by the flow, the vorticity at time $t$ is the characteristic function of a 
domain $D_t$. In general the evolution of $D_t$ is an extremely complicated phenomenon, but Kirchhoff 
proved more than one century ago that if $D_0$ is the domain enclosed by an ellipse with semi-axes $a$ and $b$,
then $D_t$ is just a rotation of $D_0$ around its centre of mass with constant angular velocity  
$\omega = ab/(a+b)$, see  \cite{Kirchoff,FP,MR}.  Domains with the simple evolution property described above are called $V$-states or rotating vortex patches. 
 They can be viewed as stationary solutions in a reference system that rotates with the patch, and they can be 
 described by means of an equation involving the stream function
 $-\log|\cdot|\ast \chi_{D_0}$ of the initial patch $D_0$ (see \cite{B}), which is formally similar to the 
 Euler-Lagrange equation \eqref{EL-1-intro}. If one wants to verify that for the elliptical patch $\Omega(a,b)$ such equation is satisfied, one needs to compute 
 explicitly $-\log|\cdot|\ast\chi_{\Omega(a,b)}$, and this can be done by first computing its gradient 
 $- (1/z)\ast \chi_{\Omega(a,b)}$.

The challenge in our case is computing the gradient of the anisotropic part of $W_\alpha \ast\chi_{\Omega(a,b)}$. 
The key observation is that it can be written in terms of suitable complex derivatives of the fundamental solution 
of the operator $\partial^2$, where $\partial = \partial/\partial z$. To compute such term explicitly we need the 
expression of $- (1/z)\ast \chi_{\Omega(a,b)}$, which was known, as well as the expression of $(z/\bar{z}^2) \ast \chi_{\Omega(a,b)}$, 
which we obtain in Proposition \ref{gradientcomp}.

What is surprising is that techniques developed in the context of fluid mechanics turn out to be crucial 
for the characterisation of the minimisers of the anisotropic energy $I_\alpha$, which arises, in the case $\alpha=1$, 
in the context of edge dislocations in metals. In particular the minimality of the semi-circle law for the dislocation energy $I_1$ 
can be deduced from Theorem~\ref{thm:chara} by a limiting argument based on
$\Gamma$-convergence (see Corollary~\ref{cor-alpha=1}). That is, we
obtain again the main result of \cite{MRS}, but with a different proof based on methods from fluid mechanics and
complex analysis.

It is worth emphasising the special role that ellipses play
in both contexts. On the one hand, in fluid mechanics they provide one of the few
explicit solutions of the incompressible Euler equations. On the other hand, the characteristic function 
of the elliptical domains $\Omega(a,b)$ 
is one of the few measures $\mu$ for which the convolution potential $W_{\alpha}\ast\mu$
 can be explicitly computed.

 What is even more surprising is that, for $0<\alpha<1$, the normalised characteristic function of 
 $\Omega(\sqrt{1-\alpha},\sqrt{1+\alpha})$  is actually
  the minimiser of the energy $I_{\alpha}$ and that it is possible to prove it.
  In fact, in the literature there are very few explicit characterisations of
minimisers for nonlocal energies and the only other example in the
nonradial case is the result proved in \cite{MRS} corresponding to
$\alpha=1$. The reason why the minimality of $\mu_{\alpha}$ is somewhat unexpected is the following. Let us first
consider the purely logarithmic case $\alpha=0$. By radial symmetry
of the energy and uniqueness, the minimiser $\mu_0$ must be radial.
This case is well-known to be connected to the classical obstacle
problem for the Laplace operator \cite{BK,CF,Ca}. Defining
$\Psi_0=W_0\ast \mu_0$ and assuming that $\mu_0$ is supported on the
closure of a smooth bounded open set $\Omega$, the
Euler-Lagrange equations \eqref{EL-1-intro}--\eqref{EL-2-intro} imply
\begin{equation}\label{eq:obstacle2l}
\begin{cases}
\displaystyle{\Psi_0 \geq C_0 - \frac{|x|^2}2}\quad & \text{ in } \R^2, \smallskip\\
-\Delta \Psi_0 \geq  0\quad & \text{ in } \R^2, \smallskip\\
\displaystyle{\Big(\Psi_0 -C_0 + \frac{|x|^2}2\Big) \Delta \Psi_0 = 0} & \text{ in  }
\R^2,
\end{cases}
\end{equation}
where $\overline{\Omega}$ is the coincidence set, i.e., the points where
$\Psi_0= C_0 - (|x|^2/2)$. It is not surprising from
\eqref{eq:obstacle2l} that $\mu_0$ is 
the normalised characteristic function of the coincidence set $\overline\Omega$ with constant density since $2\pi\mu_0=-\Delta \Psi_0$, and due to the radial symmetry the Euclidean ball is the clear candidate for $\Omega$.

In the presence of the anisotropic term, that is, for $\alpha>0$, we
write $W_\alpha=W_0+\alpha F$ and define  $\Psi_\alpha=W_\alpha\ast
\mu_\alpha$ where $\mu_\alpha$ is the unique minimiser of $I_\alpha$. A
corresponding obstacle problem as \eqref{eq:obstacle2l} can be
formally written for the potential $\Psi_\alpha$, and the coincidence
set is again determined by the condition $\Psi_\alpha= C_\alpha -
(|x|^2/2)$. Assuming it is the closure of a smooth bounded open set $\Omega$, one obtains
$$
\Delta \Psi_{\alpha}=-2\pi\mu_\alpha + \alpha \Delta F\ast
\mu_{\alpha}=-2 \quad \textrm{in } \Omega.
$$
If $\mu_{\alpha}$ is the normalised characteristic
function of $\Omega$, then $\Delta F\ast
\mu_{\alpha}$ should be constant on $\Omega$ as well.
However, computing $\Delta F\ast \chi_{\Omega}$ for a general domain
$\Omega$ is a highly non-trivial task, and in principle $\Delta F\ast \chi_{\Omega}$ 
could be a very complicated object. 
It is therefore
surprising that, for elliptic domains $\Omega=\Omega(a,b)$, $\Delta F\ast
\chi_{\Omega(a,b)}$ is constant in $\Omega(a,b)$. In fact, as we mentioned before, 
we are able to compute the convolution potential
$W_{\alpha}\ast \chi_{\Omega(a,b)}$ in the whole of $\R^2$, and to show
that  in $\Omega(a,b)$  it is a homogeneous polynomial of degree $2$ plus a 
constant. From this property, indeed, establishing
the first Euler-Lagrange condition is a relatively easy task. The
expression of the convolution potential (and of its gradient)
outside $\Omega(a,b)$ is instead much more involved, so that establishing
the second Euler-Lagrange condition is the challenge.

\subsection{Dimension of the support of the equilibrium measure} We have seen that the values $\alpha=\pm 1$ of the weight for the anisotropic term of the kernel $W_\alpha$ determine a sharp transition in the dimension of the support of the minimising measure $\mu_\alpha$ from two (for $\alpha\in (-1,1)$) to one (for $\alpha\leq-1$ and $\alpha\geq 1$).

For general energies of the form
\begin{equation} \label{eq:energy}
    E(\mu) = \frac 12 \ird\ird W(x-y) \, d\mu(x)\,d\mu(y) + \ird V(x) \, d\mu(x) \quad \text{for all $\mu \in \Par(\Rd)$},
\end{equation}
where $W\colon \Rd \to \R\cup\{+\infty\}$ is an \emph{interaction
potential} and $V\colon \Rd \to \R\cup\{+\infty\}$ is a
\emph{confining potential}, understanding how the dimension of the
support of the minimisers depends on $W$ and  $V$ is a challenging
question.

 In \cite{BCLR} the authors showed that the dimension of the support of a minimiser of $E$ is 
 directly related to the strength of the repulsion of the potential at the origin.
 What they showed is that the stronger the repulsion (up to Newtonian), the higher the dimension 
 of the support. The case of mild repulsive potentials in which the minimisers are finite number of
 Dirac deltas has been recently studied in \cite{CFP}.

Our result shows that a change of the dimension of the support of
the minimisers can also be obtained by tuning the asymmetry of the
interaction potential.

Another challenging question arising from the results in this paper
and in \cite{BCLR} is to give explicit examples in which a change of
the dimension of the support of the minimisers is obtained by tuning
the confining potential $V$,
or the singularity of the interaction potential at zero.

\subsection{More general interactions and evolution} The problem of analysing the landscape of energies
of the type
 \eqref{eq:energy} has triggered the attention of many analysts and applied mathematicians 
 in the last 20 years.

One of the main reasons for this interest, from the analytic
viewpoint, is that this question is directly linked to the stability
properties of stationary solutions of its associated gradient flow
\begin{equation}\label{gradflow}
    \partial_t\mu = \div \Big(\mu\, \nabla \frac{\delta E}{\delta \mu} \Big)= \div \big(\mu\, \nabla(W * \mu+V)\big) \quad \text{on $\Rd$, for $t>0$},
\end{equation}
in the Wasserstein sense \cite{CaMcCVi03,AmGiSa05}, where $\mu\colon
[0,\infty) \to \Par(\Rd)$ is a probability curve. Here, the
variational derivative $\delta E/\delta \mu:=W * \mu+V$ is obtained
by doing variations of the energy $E(\mu)$ preserving the unit mass
of the density as originally introduced in \cite{Otto}; see
\cite{villani,AmGiSa05} for the general theory. Equations like
\eqref{gradflow} describe the macroscopic behaviour of agents
interacting via a potential $W$, and are at the core of many
applications ranging from mathematical biology to economics; see
\cite{TBL,mogilner1999non,HP,BC} and the references therein.

In most of the early works, interaction and confinement potentials
were assumed to be smooth enough and convex in some sense, including
interesting cases with applications in granular media modelling
\cite{CaMcCVi03,Tos}. In most of the applications however the
potential $W$ is \emph{singular}, and in fact most of the rich
structure of the minimisers happens when the potentials are singular
at the origin; see \cite{d2006self,ring1,ring2,BCLR,ABCV,CH} and
\cite{CV} for a recent review in the subject. Typical interaction
potentials in applications are repulsive at the origin and
attractive at infinity (the latter guaranteeing confinement).

Euler-Lagrange necessary conditions for local minimisers of the
energy $E(\mu)$ in a suitable topology were derived in \cite{BCLR},
see also \cite{SaTo} for the particular case of the logarithmic
potential. They were used to give necessary and sufficient
conditions on repulsive-attractive potentials to have existence of
global minimisers \cite{CCP,SST}, and to analyse their regularity
for potentials which are as repulsive as, or more singular than, the
Newtonian potential \cite{CDM}. In both cases, global minimisers are
solutions of some related obstacle problems for Laplacian or
nonlocal Laplacian operators, implying that they are bounded and
smooth in their support, or even continuous up to the boundary
\cite{BK,CF,CDM,CV}. Similar Euler-Lagrange equations were also used
for nonlinear versions of the Keller-Segel model in order to
characterise minimisers of related functionals \cite{CCV}.

\bigskip

The plan of the paper is as follows. The proof of the Euler Lagrange
conditions in Theorem~\ref{thm:chara} will be done in
Section~\ref{proof-section}. We start next section,
Section~\ref{preliminary-section}, by showing the existence and
uniqueness of global minimiser for $I_\alpha$.
Section~\ref{second-proof-section} contains some additional
information. On the one hand, we discuss an alternative proof of the
first Euler-Lagrange condition and compute the minimal energy. On
the other hand, we study more general anisotropies.

\end{section}


\begin{section}{Existence and uniqueness of the minimiser of $I_{\alpha}$}\label{preliminary-section}

In this section we prove that for every $\alpha\in \R$ the nonlocal
energy $I_{\alpha}$ defined in \eqref{ce} has a unique minimiser
$\mu_\alpha\in \mathcal{P}(\mathbb{R}^2)$, and that the minimiser
has a compact support.

We observe that it is sufficient to consider the case $\alpha\in
(0,1)$. In fact, for $\alpha=0$, that is, for purely logarithmic
interactions, it is well-known that there exists a unique minimiser
of $I_{0}$, which is given by the so-called circle law $\mu_0:=\frac
1\pi\chi_{B_1(0)}$ (see, e.g., \cite{Fro, SaTo}, and the references
therein). The case $\alpha=1$, that is, the case of interacting edge
dislocations, has been recently solved in \cite{MRS}, and it has
been shown that $I_1$ has a unique minimiser, given by the
semi-circle law \eqref{semicirclaw}. A simple comparison argument
shows that $\mu_1$ is indeed the unique minimiser of $I_{\alpha}$
for any $\alpha\geq 1$. In fact, for any $\alpha\geq 1$ and any
$\mu\in \mathcal{P}(\mathbb{R}^2)$ with $\mu\neq\mu_1$, we have
$$
I_{\alpha}(\mu_1)=I_{1}(\mu_1)<I_{1}(\mu)\leq I_{\alpha}(\mu).
$$
If $\alpha<0$, instead, we observe that
$$
W_{\alpha}(x_1,x_2)=-\log|x| + |\alpha|\frac{x_2^2}{|x|^2}+\alpha,
$$
hence all results in this case may be obtained from those with
$\alpha>0$ just by swapping $x_1$ and~$x_2$.

In what follows we assume the kernel $W_{\alpha}$ to be extended to
the whole of $\R^2$ by continuity, that is, we set
$W_{\alpha}(0):=+\infty$.

\begin{prop}\label{exist+uniq}
Let $\alpha\in [0,1]$. Then the energy $I_{\alpha}$ is well defined
on $\mathcal{P}(\mathbb{R}^2)$, is strictly convex on the class of
measures with compact support and finite interaction energy, and has
a unique minimiser in $\mathcal{P}(\mathbb{R}^2)$. Moreover, the
minimiser has compact support and finite energy.
\end{prop}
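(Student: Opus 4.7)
The strategy is the direct method of the calculus of variations, with strict convexity established via Fourier analysis. I will proceed in three steps, corresponding to the three claims.

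\emph{Step 1 (well-posedness and existence).} For $\alpha\in[0,1]$ the anisotropic factor $\alpha x_1^2/|x|^2$ lies in $[0,1]$, so
\[
-\log|x-y|\le W_\alpha(x-y)\le -\log|x-y|+1.
\]
Combined with the elementary bound $-\log|x-y|\ge -\tfrac14(|x|^2+|y|^2)-C$, this shows that $I_\alpha(\mu)\in(-\infty,+\infty]$ is well defined on $\mathcal P(\R^2)$ and yields a uniform bound $\int|x|^2\,d\mu\le C(K)$ for every $\mu$ with $I_\alpha(\mu)\le K$. Given a minimising sequence $\{\mu_n\}$, tightness follows and Prokhorov's theorem provides $\mu_n\weakto\mu_\alpha$ along a subsequence. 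The confining part is lower semicontinuous by Fatou on the second moment, while the interaction part is lower semicontinuous since the shifted integrand $W_\alpha(x-y)+\tfrac14(|x|^2+|y|^2)$ is non-negative and lower semicontinuous on $\R^2\times\R^2$, so the Portmanteau theorem applied to $\mu_n\otimes\mu_n\weakto\mu_\alpha\otimes\mu_\alpha$ gives lower semicontinuity of $I_\alpha$. Hence $\mu_\alpha$ is a minimiser.

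\emph{Step 2 (strict convexity and uniqueness).} This is the main step. I decompose $W_\alpha=(1-\alpha)W_0+\alpha W_1$, where $W_0(x)=-\log|x|$ and $W_1(x)=-\log|x|+x_1^2/|x|^2$. Both are positive semi-definite kernels on the class of signed measures with zero total mass, compact support, and finite interaction energy. For $W_0$ this is classical (its Fourier multiplier is proportional to $|\xi|^{-2}$; see \cite{SaTo}). For $W_1$ it is the central Fourier identity underlying the dislocation model: as established in \cite{GPPS1, MRS}, there is a constant $c>0$ such that
\begin{equation*}
\iint W_1(x-y)\,d\nu(x)\,d\nu(y)\,=\,c\int\frac{\xi_2^2}{|\xi|^4}|\hat\nu(\xi)|^2\,d\xi\,\ge\,0
\end{equation*}
for every such $\nu$. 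Consequently, for $\alpha\in(0,1)$,
\begin{equation*}
\iint W_\alpha(x-y)\,d\nu(x)\,d\nu(y)\,=\,c\int\Big(\frac{1-\alpha}{|\xi|^2}+\alpha\frac{\xi_2^2}{|\xi|^4}\Big)|\hat\nu(\xi)|^2\,d\xi,
\end{equation*}
whose integrand is strictly positive on $\R^2\setminus\{0\}$, so the form vanishes only when $\nu=0$. This gives strict convexity of $I_\alpha$; the boundary cases $\alpha=0$ and $\alpha=1$ are covered by the cited results. Uniqueness of $\mu_\alpha$ follows.

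\emph{Step 3 (compact support and finite energy).} The finite energy is automatic from the minimising property combined with Step 1. For compact support, observe that $W_\alpha\ast\mu_\alpha(x)$ grows at most like $\log|x|$ as $|x|\to\infty$ while the confinement grows quadratically, so a direct truncation/exchange argument applies: if $\mu_\alpha$ had mass at $|x|$ large enough, transporting that mass to a bounded region would strictly decrease the total energy, contradicting minimality.

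The principal obstacle is Step 2: establishing rigorously the Fourier identity for $W_1$ (a computation of a tempered-distribution Fourier transform involving the anisotropic singularity $x_1^2/|x|^2$) and justifying Plancherel for genuinely singular compactly supported signed measures (via a mollification/approximation in the finite-energy class) require care. Once the positive-definiteness of $W_1$ from the dislocation literature is in hand, the combination argument yielding strict positivity of the quadratic form for $\alpha\in(0,1)$ is immediate from the convex-combination structure of $W_\alpha$.
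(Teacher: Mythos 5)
Your proposal is correct and shares the paper's overall architecture: direct method for existence, positivity of the Fourier transform of the kernel for strict convexity and uniqueness, and a comparison argument for compact support, with the delicate passage from Schwartz test functions to compactly supported finite-energy measures delegated to \cite{MRS} in both cases. The one point where you genuinely diverge is how positivity is obtained: the paper computes the tempered-distribution Fourier transform of $W_\alpha$ directly (following \cite{MRS}), getting the multiplier $\frac{1}{2\pi}\,\frac{(1-\alpha)\xi_1^2+(1+\alpha)\xi_2^2}{|\xi|^4}$ on test functions vanishing at the origin, whereas you write $W_\alpha=(1-\alpha)W_0+\alpha W_1$ and add the two known endpoint representations ($\hat W_0\sim|\xi|^{-2}$, classical; $\hat W_1\sim\xi_2^2|\xi|^{-4}$, from the dislocation case). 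By linearity your convex combination reproduces exactly the paper's multiplier, so the routes are equivalent; yours avoids redoing the distributional computation at the price of importing the $\alpha=1$ identity for measures wholesale from \cite{MRS}, which is also where the ``principal obstacle'' you flag (Plancherel for singular neutral measures) is resolved in the paper. Two small repairs: the endpoint multipliers carry different constants ($\tfrac{1}{2\pi|\xi|^2}$ versus $\tfrac{\xi_2^2}{\pi|\xi|^4}$), so a single constant $c$ in your combined formula is imprecise, though harmless for the sign argument; and since strict convexity is only proved on compactly supported measures of finite interaction energy, the conclusion ``uniqueness follows'' at the end of your Step~2 already needs the compact support and finite energy of \emph{every} minimiser, which you only address in Step~3 --- the paper establishes this first, via the quadratic lower bound \eqref{bound:compact} and \cite[Section~2.2]{MRS}, and your sketched truncation/exchange argument for compact support should likewise be run through that coercivity bound rather than through the pointwise growth of $W_\alpha\ast\mu_\alpha$ alone.
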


\begin{proof}
The case $\alpha=0$ is well-known. The proof for $\alpha\in (0,1)$
follows the lines of the analogous result for $\alpha=1$; see
\cite[Section~2]{MRS}. For the convenience of the reader we recall
the main steps of the proof.\smallskip

\noindent \textit{Step~1: Existence of a compactly supported
minimiser.} We have that
\begin{equation}
W_{\alpha}(x-y) + \frac12(|x|^2+|y|^2) \geq W_0(x-y) +
\frac12(|x|^2+|y|^2) \geq
 \left(\frac12-\frac{1}{e}\right)\, (|x|^2+|y|^2). \label{bound:compact}
\end{equation}
The lower bound \eqref{bound:compact} guarantees that $I_{\alpha}$
is well defined and nonnegative on $\mathcal{P}(\mathbb{R}^2)$ and,
since $I_\alpha(\mu_0)<+\infty$, where $\mu_0= \frac{1}{\pi}
\chi_{B_1(0)}$, it  implies that $\inf_{\mathcal{P}(\mathbb{R}^2)}
I_\alpha < +\infty$. It also provides tightness and hence
compactness with respect to narrow convergence for minimising
sequences, that, together with the lower semicontinuity of
$I_\alpha$, guarantees the existence of a minimiser.

As in \cite[Section~2.2]{MRS}, one can show that any minimiser of
$I_{\alpha}$ has compact support, again by
\eqref{bound:compact}.\smallskip

\noindent \textit{Step~2: Strict convexity of $I_\alpha$ and
uniqueness of the minimiser.} We prove that
\begin{equation}\label{cvx0}
\int_{\R^2} W_{\alpha}\ast (\nu_1-\nu_2) \,d(\nu_1-\nu_2) > 0
\end{equation}
for every $\nu_1, \nu_2 \in \mathcal{P}(\R^2)$, $\nu_1\neq \nu_2$,
with compact support and finite interaction energy, namely such that
$\int_{\R^2} (W_{\alpha}\ast\nu_i) \, d\nu_i < + \infty$ for
$i=1,2$. Condition \eqref{cvx0} implies strict convexity of
$I_{\alpha}$ on the set of probability measures with compact support
and finite interaction energy and, consequently, uniqueness of the
minimiser.

To prove \eqref{cvx0}, we argue again as in \cite[Section~2.3]{MRS}.
The heuristic idea is to rewrite the interaction energy of
$\nu:=\nu_1-\nu_2$ in Fourier space, as
\begin{equation*}
\int_{\R^2} W_{\alpha}\ast \nu \,d \nu = \int_{\R^2}
\hat{W}_\alpha(\xi) |\hat\nu(\xi)|^2\, d\xi.
\end{equation*}
Since $\nu$ is a neutral measure, $\hat\nu$ vanishes at $\xi=0$. So,
the claim \eqref{cvx0} follows by showing the positivity of the
Fourier transform of $W_{\alpha}$ on positive test functions
vanishing at zero.

Since $W_{\alpha}\in L^1_{\mathrm{loc}}(\R^2)$ and has a logarithmic
growth at infinity, it is a tempered distribution, namely
$W_{\alpha}\in{\mathcal S}'$, where $\mathcal S$ denotes the
Schwartz space; hence $\hat W_{\alpha}\in{\mathcal S}'$. We recall
that $\hat W_{\alpha}$ is defined by the formula
$$
\langle \hat W_{\alpha}, \varphi\rangle := \langle W_{\alpha}, \hat
\varphi\rangle \qquad \text{ for every } \varphi\in{\mathcal S}
$$
where, for $\xi\in\R^2$,
\begin{equation*}
\hat \varphi(\xi):=\int_{\R^2}\varphi(x)e^{-2\pi i\xi\cdot x}\, dx.
\end{equation*}
Proceeding as in \cite[Section~2.3]{MRS}, we have that the Fourier
transform $\hat W_{\alpha}$ of $W_{\alpha}$ is given by
\begin{align}
\langle \hat W_{\alpha}, \varphi\rangle
 = \Big(\frac{\alpha}{2}+\gamma+\log\pi \Big) \varphi(0)
&+ \frac{1}{2\pi} \int_{|\xi|\leq 1}(\varphi(\xi)-\varphi(0))\frac{(1-\alpha)\xi_1^2+(1+\alpha)\xi_2^2}{|\xi|^4}\, d\xi \nonumber\\
&+ \frac{1}{2\pi}
\int_{|\xi|>1}\varphi(\xi)\frac{(1-\alpha)\xi_1^2+(1+\alpha)\xi_2^2}{|\xi|^4}
\,d\xi\label{hatV}
\end{align}
for every $\varphi\in{\mathcal S}$, where $\gamma$ is the Euler
constant. In particular, from \eqref{hatV}, we have that
\begin{equation}\label{transV0}
\langle \hat W_{\alpha}, \varphi \rangle =
\frac1{2\pi}\int_{\R^2}\frac{(1-\alpha)\xi_1^2+(1+\alpha)\xi_2^2}{|\xi|^4}\varphi(\xi)\,
d\xi
\end{equation}
for every $\varphi\in\mathcal S$ with $\varphi(0)=0$. Thus,
\eqref{transV0} implies that $\langle \hat W_{\alpha}, \varphi
\rangle>0$ for every $\varphi\in\mathcal S$ with $\varphi(0)=0$ and
$\varphi\geq0$, $\varphi\not\equiv0$.

Finally, the approximation argument in the proof of
\cite[Theorem~1.1]{MRS} allows one to pass from test functions in
$\mathcal S$ to measures. Hence \eqref{cvx0} is proved.
\end{proof}


\end{section}


\begin{section}{Characterisation of the minimiser of $I_{\alpha}$: The ellipse law.}\label{proof-section}

It is a standard computation in potential theory (see \cite{SaTo,
MRS}) to show that any minimiser $\mu$ of $I_\alpha$ must satisfy
the following Euler-Lagrange conditions: there exists $C\in\R$ such
that
\begin{align}\label{EL-1-real}
&(W_{\alpha}\ast \mu)(x) + \frac{|x|^2}2 = C \quad  \text{for
$\mu$-a.e.\ } x\in \supp\mu,
\\ \label{EL-2-real}
&(W_{\alpha}\ast \mu)(x) + \frac{|x|^2}2 \geq C \quad \text{for
q.e.\ }x\in \R^2,
\end{align}
where {\em quasi everywhere} (q.e.) means up to sets of zero
capacity.  The Euler-Lagrange conditions
\eqref{EL-1-real}--\eqref{EL-2-real} are in fact equivalent to
minimality for $0\leq \alpha\leq 1$ due to
Proposition~\ref{exist+uniq}. See \cite[Section 3]{MRS}  for
details.

In this section we show that, for every $0\leq\alpha<1$, the measure
$\mu_\alpha$ defined in \eqref{eq:sc-intro} satisfies the
Euler-Lagrange conditions \eqref{EL-1-intro}--\eqref{EL-2-intro},
for some constant $C_\alpha\in\R$. By the above discussion  this
immediately implies that $\mu_{\alpha}$ is the unique minimiser of
$I_{\alpha}$, thus completing the proof of Theorem~\ref{thm:chara}.
The precise value of $C_\alpha$ will be computed in
Section~\ref{second-proof-section}.

We begin by studying $W_{\alpha}\ast \chi_{\Omega(a,b)}$ for every
$b\geq a>0$. We note that the function $W_\alpha\ast
\chi_{\Omega(a,b)}$ is $C^1$ in $\R^2$ (see \cite{Stein}). As a
first step, we compute the convolution $\big(\nabla W_{\alpha}\ast
\chi_{\Omega(a,b)}\big)(x)$ for every $b\geq a>0$ and at every point
$x\in \R^2$.

In fact we wish to prove that $\mu_\alpha$ satisfies the conditions
\begin{align}\label{EL-1-new}
\nabla(W_{\alpha}\ast \mu_{\alpha})(x) + x = 0  \quad &\text{for
every } x\in \Omega(\sqrt{1-\alpha},\sqrt{1+\alpha}),
\\ \label{EL-2-new}
x\cdot \nabla (W_{\alpha}\ast \mu_{\alpha})(x) + |x|^2 \geq 0 \quad
&\text{for every }x\in \R^2.
\end{align}
Clearly, conditions \eqref{EL-1-new}--\eqref{EL-2-new} imply that
\eqref{EL-1-intro}--\eqref{EL-2-intro} are satisfied for some
constant $C_\alpha\in\R$.

In order to evaluate the convolution $\nabla W_{\alpha}\ast
\chi_{\Omega(a,b)}$, it is convenient to work in complex variables.
As usual, we identify $z=x_1+ix_2\equiv x=(x_1,x_2)$, and we write
the standard differential operators as
$$
\partial= \frac{\partial}{\partial z}=\frac12 \left(\frac{\partial}{\partial x_1}-i\frac{\partial}{\partial x_2}\right)\quad \text{and} \quad \bar\partial= \frac{\partial}{\partial \bar z}=\frac12 \left(\frac{\partial}{\partial x_1}+i\frac{\partial}{\partial x_2}\right).
$$

In complex variables the potential $W_\alpha$ in \eqref{V:int0}
reads as
\begin{equation*}
W_\alpha(x) \equiv W_\alpha (z)=-\frac12 \log (z\bar
z)+\frac{\alpha}2\left(1+\frac{z}{2\bar z}+\frac{\bar z}{2
z}\right),
\end{equation*}
and thus
\begin{equation}\label{gradpotcomp}
\nabla W_\alpha
(x)=-\frac{x}{|x|^2}+2\alpha\frac{x_1x_2}{|x|^4}x^\perp \equiv
2\bar\partial W_\alpha (z) =-\frac1{\bar
z}+\frac{\alpha}2\frac1{z}-\frac{\alpha}2\frac{z}{\bar z^2},
\end{equation}
where $x^\perp=(x_2,-x_1)$.

The result is the following.

\begin{prop}\label{gradientcomp}
Let $b\geq a>0$ and $\mu_{a,b}:=\frac{1}{\pi ab}\,
\chi_{\Omega(a,b)}$ be the (normalised) characteristic function of
the ellipse of semi-axes $a$ and $b$. Then we have
\begin{align}\nonumber
\nabla(W_{\alpha}\ast \mu_{a,b})(z)  &= \frac{1}{\pi
ab}\Big(-\frac1{\bar
z}+\frac{\alpha}2\frac1{z}-\frac{\alpha}2\frac{z}{\bar
z^2}\Big)\ast\chi_{\Omega(a,b)}\,(z) \\  &=
\frac1{ab}\left(-1-\alpha\lambda \right) z +
\frac1{ab}\left(\lambda+\frac{\alpha}2 +\lambda^2
\frac{\alpha}2\right) \bar z \label{gradinside}
\end{align}
for every $z\in\Omega(a,b)$ and
\begin{align}\nonumber
\nabla(W_{\alpha}\ast \mu_{a,b})(z)  &=\frac{1}{\pi ab}
\Big(-\frac1{\bar
z}+\frac{\alpha}2\frac1{z}-\frac{\alpha}2\frac{z}{\bar
z^2}\Big)\ast\chi_{\Omega(a,b)}\,(z)
\\
&= -(2+\alpha\lambda)h(\bar z)+\alpha h(z) - \alpha (\lambda\bar z -
z + 2ab h(\bar z)) h'(\bar z)  \label{gradoutside}
\end{align}
for every $z\in \Omega(a,b)^c$. Here
\begin{equation}\label{boundarycomp}
\lambda:=\frac{a-b}{a+b}, \qquad  h(z):=\frac{1}{z+\sqrt{z^2+c^2}},
\end{equation}
and $c^2=b^2-a^2$, where $c$ is the eccentricity of the ellipse.
\end{prop}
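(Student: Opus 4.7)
By the decomposition \eqref{gradpotcomp} of $\nabla W_\alpha$, the proposition reduces to computing three convolutions, $\tfrac{1}{\bar z}\ast \chi_{\Omega(a,b)}$, $\tfrac{1}{z}\ast \chi_{\Omega(a,b)}$, and $\tfrac{z}{\bar z^2}\ast \chi_{\Omega(a,b)}$, at every $z\in\R^2$. Since $\chi_{\Omega(a,b)}$ is real-valued, the first is the complex conjugate of the second, so effectively only two distinct quantities need to be evaluated. The natural tool is the Joukowski-type conformal map
$$\phi(w) := R\bigl(w + \lambda/w\bigr), \qquad R := (a+b)/2,$$
which sends $\{|w|>1\}$ biholomorphically onto $\Omega(a,b)^c$ and $\partial B_1$ onto $\partial\Omega(a,b)$, with inverse $\phi^{-1}(z) = 1/[(a+b)h(z)]$. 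A direct calculation from the parametrization $z = a\cos\theta + ib\sin\theta$ of $\partial\Omega(a,b)$ yields $z + \sqrt{z^2+c^2} = (a+b)e^{i\theta}$ there, so $|\phi^{-1}(z)| = 1$ on $\partial\Omega(a,b)$.

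\textbf{Cauchy terms.} The convolution $\tfrac{1}{z}\ast \chi_{\Omega(a,b)}$ is a classical object in potential theory, and coincides, up to a constant, with the velocity field of the Kirchhoff elliptical vortex patch; a closed form is derived, for instance, in \cite{Joans}. The plan is to rederive it in the present setting by converting the area integral into a contour integral on $\partial\Omega(a,b)$ via complex Stokes' theorem, pulling back through $\phi$ to a rational integral in $w$ on $|w|=1$, and evaluating by residues. Inside $\Omega(a,b)$ the outcome is linear in $(z,\bar z)$, while outside it is a rational expression in $h(z)$; the complex conjugate formula then gives $\tfrac{1}{\bar z}\ast \chi_{\Omega(a,b)}$.

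\textbf{The anisotropic term.} The heart of the proof is the computation of $\tfrac{z}{\bar z^2}\ast \chi_{\Omega(a,b)}$. The starting identity, valid off the diagonal, is
$$\frac{z-\zeta}{(\bar z-\bar\zeta)^2} = \bar\partial_\zeta\!\left(\frac{z-\zeta}{\bar z-\bar\zeta}\right),$$
so that complex Stokes' theorem (applied on $\Omega(a,b)\setminus B_\varepsilon(z)$ with $\varepsilon\downarrow 0$ when $z\in\Omega(a,b)$, to isolate the residue at $\zeta = z$) converts this convolution into a contour integral on $\partial\Omega(a,b)$ plus, in the interior case, an explicit local contribution. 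Substituting $\zeta = R(w+\lambda/w)$ and $\bar\zeta = R(1/w+\lambda w)$ on $|w|=1$, and using the algebraic factorization
$$\bar z-\bar\zeta \;=\; \frac{R(\overline{w_z}-\lambda w)(w\overline{w_z}-1)}{w\,\overline{w_z}}, \qquad w_z := \frac{z+\sqrt{z^2+c^2}}{a+b},$$
renders the integrand rational in $w$. The residue theorem then applies: the poles inside $|w|=1$ are $w = 0$ (of order two, producing the $h'(\bar z)$ contribution) and $w = 1/\overline{w_z}$, the latter being enclosed only when $|w_z|>1$, i.e., when $z\in\Omega(a,b)^c$. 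This dichotomy is precisely what produces the two distinct formulas \eqref{gradinside} and \eqref{gradoutside}.

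\textbf{Main obstacle.} The principal difficulty is the algebraic simplification required to reassemble the residues into the compact exterior expression \eqref{gradoutside} involving $h(z)$, $h(\bar z)$, and $h'(\bar z)$. As consistency checks I would verify that \eqref{gradinside} and \eqref{gradoutside} agree on $\partial\Omega(a,b)$, reflecting the $C^1$ regularity of $W_\alpha\ast\chi_{\Omega(a,b)}$ across the boundary (cf.\ \cite{Stein}) and the identity $|z + \sqrt{z^2+c^2}| = a+b$ there, and that the exterior expression has the correct $O(1/\bar z)$ decay at infinity dictated by the total mass of $\chi_{\Omega(a,b)}$.
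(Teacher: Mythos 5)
Your plan is sound and, at the decisive step, genuinely different from the paper's. Both arguments reduce the proposition via \eqref{gradpotcomp} to the Cauchy transform of the ellipse and to the convolution $\tfrac{z}{\bar z^2}\ast\chi_{\Omega(a,b)}$; the paper simply quotes \eqref{cauchye} from \cite{Joans}, whereas you rederive it by residues. For the anisotropic term the paper never evaluates a single integral: it uses that $\tfrac1\pi\tfrac z{\bar z}$ is the fundamental solution of $\partial^2$ to see that $\tfrac1\pi\tfrac z{\bar z}\ast\chi_{\Omega(a,b)}$ is $z^2/2$ (inside) plus terms $z h_1(\bar z)+h_2(\bar z)$ with anti-holomorphic unknowns, identifies $h_1$ from \eqref{cauchyeb}, and pins down $(h_2)'$ through continuity of the convolution across $\partial\Omega(a,b)$, decay at infinity and Liouville's theorem. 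You instead compute head-on: the identity $\tfrac{z-\zeta}{(\bar z-\bar\zeta)^2}=\bar\partial_\zeta\bigl(\tfrac{z-\zeta}{\bar z-\bar\zeta}\bigr)$, complex Green's theorem, pull-back by the Joukowski map, residues. Your ingredients check out: the factorization of $\bar z-\bar\zeta$ is exactly the statement $\overline{\phi(w_z)}=\bar z$; the third zero $w=\overline{w_z}/\lambda$ never enters the unit disc since $|w_z|\ge\sqrt{|\lambda|}$; the boundary term on the excised circle vanishes identically (the integrand reduces to $\tfrac{\varepsilon}{2}e^{3i\theta}\,d\theta$, whose integral is zero, so the ``local contribution'' you allow for is in fact null); and the residue at $w=0$ alone yields $\bigl(\tfrac{z}{\bar z^2}\ast\chi_{\Omega(a,b)}\bigr)(z)=\pi\lambda(z-\lambda\bar z)$ for $z\in\Omega(a,b)$, in agreement with \eqref{new4}, while the additional simple pole at $w=1/\overline{w_z}$ for exterior $z$ generates the $h(\bar z)$, $h'(\bar z)$ terms of \eqref{gradoutside} (in reassembling them, use that the quadrant-preserving branch satisfies $\overline{\sqrt{z^2+c^2}}=\sqrt{\bar z^2+c^2}$, hence $\overline{w_z}=1/\bigl((a+b)h(\bar z)\bigr)$ and $\sqrt{\bar z^2+c^2}=R(\overline{w_z}-\lambda/\overline{w_z})$). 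What the paper's route buys is economy: no residue algebra at all, with the known Cauchy transform doing double duty, at the cost of importing it from \cite{Joans}. What your route buys is a self-contained computation that also reproves \eqref{cauchye} and comes with natural consistency checks (matching on $\partial\Omega(a,b)$ by the $C^1$ regularity from \cite{Stein}, correct decay at infinity); the only remaining work is the exterior bookkeeping you already flag as the main obstacle.
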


We note that here and in what follows $\sqrt{z^2+c^2}$ denotes the
branch of the complex square root that behaves asymptotically as $z$
at infinity. Namely, for $z\in\mathbb{C}\setminus [-ic,ic]$ such
that $z=\rho e^{i\theta}$ with $\rho>0$ and $0\leq \theta<2\pi$, we
have $z^2+c^2=\rho_1e^{i\theta_1}$ with $\rho_1>0$, and
 $0\leq \theta_1<2\pi$ if $0\leq \theta<\pi$ and $2\pi\leq \theta_1<4\pi$ if $\pi\leq \theta<2\pi$,
and $\sqrt{z^2+c^2}=\sqrt{\rho_1}e^{i\theta_1/2}$. In other words,
we choose the branch of the complex square root that preservs the
quadrants. In particular, for every $z\in \C\setminus [-ic, ic]$ we
have $\Re(z)\,\Re(\sqrt{z^2+c^2})\geq 0$ and
$\Im(z)\,\Im(\sqrt{z^2+c^2})\geq 0$, where $\Re(z)$ and $\Im(z)$
denote, respectively,  the real and imaginary part of $z$. This
property will be crucial in the proof of Theorem~\ref{thm:chara}.

\begin{proof}[Proof of Proposition~\ref{gradientcomp}]
We divide the proof into two steps.
\smallskip

\noindent \textit{Step~1: Computation of $\frac{1}{z}\ast
\chi_{\Omega(a,b)}$ and $\frac{1}{\bar z}\ast \chi_{\Omega(a,b)}$.}
We observe that $\frac{1}{z}\ast \chi_{\Omega(a,b)}$ is the Cauchy
transform of the (characteristic function of the) ellipse
$\Omega(a,b)$, up to a multiplicative constant. Indeed, the Cauchy
transform of a $C^1$ domain $\Omega\subset \C$ is defined as
\begin{equation}\label{cauchy}
\mathcal{C}(\chi_\Omega)(z):=\frac1{\pi}\int_\Omega
\frac1{z-\xi}\,d\xi.
\end{equation}
Clearly $\mathcal{C}(\chi_\Omega)$ is a continuous function in $\C$,
holomorphic in $\C\setminus\overline\Omega$ and vanishes at
infinity.

In the special case of an ellipse, namely for $\Omega =
\Omega(a,b)$, the expression \eqref{cauchy} can be computed
explicitly (see \cite[page 1408]{Joans}), and is given by
\begin{equation}\label{cauchye}
\frac1{\pi z}\ast \chi_{\Omega(a,b)}
=\mathcal{C}(\chi_{\Omega(a,b)})(z)= \begin{cases}
\bar z -\lambda z  &  \text{if }  z\in \Omega(a,b),\smallskip \\
2ab h(z) &  \text{if } z\in \Omega(a,b)^c,
\end{cases}
\end{equation}
where $\lambda$ and $h$ are as in \eqref{boundarycomp}.

By taking the conjugate of \eqref{cauchye} we obtain directly
\begin{equation}\label{cauchyeb}
\frac1{\pi \bar z}\ast
\chi_{\Omega(a,b)}=\mathcal{C}(\chi_{\Omega(a,b)})(\bar z) =
\begin{cases}
z -\lambda \bar z  &  \text{if }   z\in \Omega(a,b), \smallskip\\
2ab h(\bar z) &  \text{if }   z\in \Omega(a,b)^c,
\end{cases}
\end{equation}
and hence the first two terms of $\nabla W_\alpha \ast
\chi_{\Omega(a,b)}$ are now computed.\smallskip

\noindent \textit{Step~2: Computation of $\frac{z}{\bar z^2}\ast
\chi_{\Omega(a,b)}$.} We start by observing that
\begin{equation}\label{rel}
-\frac{1}{\pi} \frac{z}{\bar z^2}\ast \chi_{\Omega(a,b)} =\bar
\partial\left(\frac{1}{\pi} \frac{z}{\bar z} \ast \chi_{\Omega(a,b)}
\right),
\end{equation}
hence it is sufficient to compute $\frac{1}{\pi} \frac{z}{\bar z}
\ast \chi_{\Omega(a,b)}$. Now we recall that $\frac{1}{\pi}
\frac{z}{\bar z}$ is the fundamental solution of $\partial^2$, hence
$$
\partial^2 \left(\frac1\pi  \frac{z}{\bar z} \ast \chi_{\Omega(a,b)}\right) = \partial^2 \left( \frac1\pi \frac{z}{\bar z} \right)\ast \chi_{\Omega(a,b)}=\begin{cases}
1  &  \text{if } z\in \Omega(a,b),\\
0 &  \text{if } z\in \Omega(a,b)^c.
\end{cases}
$$
The previous expression implies that $\frac1\pi  \frac{z}{\bar z}
\ast \chi_{\Omega(a,b)}$ is quadratic in $z$ in $\Omega(a,b)$ and
linear in $z$ in $\Omega(a,b)^c$. More precisely,
\begin{equation}\label{new1}
\frac1\pi  \frac{z}{\bar z}\ast \chi_{\Omega(a,b)}= \begin{cases}
\frac{z^2}2 +z h_1^i(\bar z) + h_2^i(\bar z) &  \text{if }  z\in \Omega(a,b),\smallskip\\
z h_1^o(\bar z) + h_2^o(\bar z) &  \text{if }  z\in \Omega(a,b)^c,
\end{cases}
\end{equation}
with $h_1^i$,  $h_2^i$,  $h_1^o$ and  $h_2^o$ holomorphic functions
in their respective domains (and the indices $i$ and $o$ stand for
``inner" and ``outer").

It remains to determine the functions $h_1^i$,  $h_2^i$,  $h_1^o$
and  $h_2^o$ explicitly. By applying the operator $\partial$ to both
sides of \eqref{new1}, we deduce
$$
\frac{1}{\pi\bar z}\ast \chi_{\Omega(a,b)}=
 \begin{cases}
z + h_1^i(\bar z) &  \text{if } z\in \Omega(a,b),\smallskip\\
 h_1^o(\bar z) &  \text{if } z\in \Omega(a,b)^c,
\end{cases}
$$
which, together with \eqref{cauchyeb}, leads to the identification
of $h_1^i$ and $h_1^o$, as
$$
h_1^i(\bar z)=-\lambda \bar z \text{ in } \Omega(a,b) \quad \text{
and } \quad h_1^o(\bar z)=2ab h(\bar z) \text{ in } \Omega(a,b)^c
\,.
$$
Substituting these expressions into \eqref{new1} we then have
\begin{equation}\label{new12}
\frac1\pi  \frac{z}{\bar z}\ast \chi_{\Omega(a,b)}= \begin{cases}
\frac{z^2}2 -\lambda \bar z z + h_2^i(\bar z)  &\text{if }  z\in \Omega(a,b),\smallskip\\
2ab z h(\bar z) + h_2^o(\bar z)  &\text{if }  z\in \Omega(a,b)^c,
\end{cases}
\end{equation}
with $h_2^i$ and  $h_2^o$ holomorphic functions in their respective
domains, still to be determined. By \eqref{rel}, however, it is
sufficient to determine their derivatives, since, by applying the
operator $\bar \partial$ to both sides of \eqref{new12}, we have
\begin{equation}\label{new2}
-\frac1\pi \frac{z}{\bar z^2}\ast \chi_{\Omega(a,b)}=\bar
\partial\left(\frac1\pi \frac{z}{\bar z} \right)\ast
\chi_{\Omega(a,b)} = \begin{cases}
-\lambda z + (h_2^i)' (\bar z)  &\text{if }  z\in \Omega(a,b), \smallskip \\
2ab z h'(\bar z) + (h_2^o)'(\bar z)  &\text{if }  z\in
\Omega(a,b)^c.
\end{cases}
\end{equation}
Now we observe that the function $\frac1\pi \frac{z}{\bar z^2}\ast
\chi_{\Omega(a,b)}$ on the left-hand side of \eqref{new2} is
continuous in $\C$  and decays to zero as $z\to \infty$; see e.g.\
\cite{Stein}. Therefore, also the right-hand side of \eqref{new2} is
continuous in $\C$, which implies in particular that
\begin{equation}\label{equality:boundary}
-\lambda z + (h_2^i)' (\bar z) = 2ab z h'(\bar z) + (h_2^o)'(\bar z)
\end{equation}
for every $z\in \partial \Omega(a,b)$. By using the expression of
the boundary of the ellipse in complex variables, namely
\begin{equation*}
\partial \Omega(a,b)=\{z\in \C : \bar z = \lambda z + 2ab h(z)\}
\end{equation*}
where $\lambda$ and $h$ are defined as in \eqref{boundarycomp}, and
by rearranging the terms in \eqref{equality:boundary}, we obtain
that
\begin{equation}\label{relbder}
-\lambda^2 \bar z + (h_2^i)' (\bar z) = 2ab\lambda h(\bar z)+2ab
(\lambda\bar z+2ab h(\bar z)) h'(\bar z) + (h_2^o)'(\bar z)
\end{equation}
on $\partial \Omega(a,b)$. Consider now the auxiliary function
\begin{equation}\label{new3}
R(\bar z)=\begin{cases}
-\lambda^2 \bar z + (h_2^i)' (\bar z)  &  \text{if }  z\in \Omega(a,b),\smallskip\\
2ab\lambda h(\bar z)+2ab (\lambda\bar z+2ab h(\bar z)) h'(\bar z) +
(h_2^o)'(\bar z)  &  \text{if }  z\in \Omega(a,b)^c.
\end{cases}
\end{equation}
Because of the continuity condition \eqref{relbder}, $R(\bar z)$ is
an anti-holomorphic function in $\C$. Moreover, it easy to see that
$R$ has zero limit at $\infty$. This is clear for all the terms in
the expression of $R$ in $\Omega(a,b)^c$ involving $h$ and $h'$, by
\eqref{boundarycomp}; for the term $(h_2^o)'$ it follows by
\eqref{new2}. The Liouville Theorem then implies that $R(\bar
z)\equiv 0$. As a consequence, both expressions on the right-hand
side of \eqref{new3} are zero, which gives
\begin{equation*}
(h_2^i)'(\bar z)=\lambda^2 \bar z \text{ in } \Omega(a,b) \quad
\text{ and } \quad (h_2^o)'(\bar z)=-2ab\lambda h(\bar z)-2ab
(\lambda\bar z+2ab h(\bar z)) h'(\bar z) \text{ in } \Omega(a,b)^c,
\end{equation*}
and hence the identification of $(h_2^i)'$ and $(h_2^o)'$ in their
respective domains.

Plugging these formulas into \eqref{new2}, we finally conclude that
\begin{equation}\label{new4}
-\frac1\pi \frac{z}{\bar z^2}\ast \chi_{\Omega(a,b)} = \begin{cases}
\lambda^2 \bar z - \lambda z &  \text{if }  z\in \Omega(a,b),\smallskip\\
 2ab (z-\lambda\bar z - 2ab h(\bar z)) h'(\bar z) - 2ab\lambda h(\bar z) & \text{if }  z\in \Omega(a,b)^c.
\end{cases}
\end{equation}

\smallskip

Finally, using \eqref{cauchye}, \eqref{cauchyeb} and \eqref{new4} we
have that \eqref{gradinside} and \eqref{gradoutside} immediately
follow.
\end{proof}

We are now in the position to prove our main result.

\begin{proof}[Proof of Theorem~\ref{thm:chara}]
We need to show that \eqref{EL-1-new}--\eqref{EL-2-new} hold. By
using the expression \eqref{gradpotcomp}, conditions
\eqref{EL-1-new}--\eqref{EL-2-new} are equivalent to show that for
$a=\sqrt{1-\alpha}$ and $b=\sqrt{1+\alpha}$ we have
\begin{eqnarray}\label{EL-1-comp}
\frac{1}{\pi ab} \big(\nabla W_{\alpha}\ast\chi_{\Omega(a,b)}\big) \,(z) + z =0 && \text{ for every } z\in \Omega(a,b),\\
\label{EL-2-comp} \frac{1}{\pi ab}\Re\Big(\bar z \big(\nabla
W_{\alpha}\ast\chi_{\Omega(a,b)}\big) \,(z) \Big)  + |z|^2 \geq 0 &&
\text{ for every } z\in \Omega(a,b)^c.
\end{eqnarray}
\smallskip

\noindent \textit{Step~1: The measure $\mu_\alpha$ satisfies
\eqref{EL-1-new}.} Using \eqref{gradinside}, we have that
$$
\frac{1}{\pi ab}\big(\nabla W_{\alpha}\ast
\chi_{\Omega(a,b)}\big)(z) + z = \frac1{ab}\left(-1-\alpha\lambda+ab
\right) z + \frac1{ab}\left(\lambda+\frac{\alpha}2 +\lambda^2
\frac{\alpha}2\right) \bar z
$$
for every $z\in\Omega(a,b)$. It is easy to check that
$a=\sqrt{1-\alpha}$ and $b=\sqrt{1+\alpha}$ are the unique solution
of the system
$$
\begin{cases}
-1-\alpha\lambda+ab=0, \smallskip\\
\lambda+\dfrac{\alpha}2 +\lambda^2 \dfrac{\alpha}2=0,
\end{cases}
$$
leading to condition \eqref{EL-1-comp}, hence \eqref{EL-1-new}.

\smallskip

\noindent \textit{Step~2: The measure $\mu_\alpha$ satisfies
\eqref{EL-2-new}.} By \eqref{gradoutside} we have that
\begin{equation}\label{defg}
(\nabla W_{\alpha}\ast \mu_{\alpha})(z) + z =
-(2+\alpha\lambda)h(\bar z)+\alpha h(z) - \alpha (\lambda\bar z - z
+ 2ab h(\bar z)) h'(\bar z) + z
\end{equation}
for every $z\in \Omega(\sqrt{1-\alpha},\sqrt{1+\alpha})^c$. Since
$a=\sqrt{1-\alpha}$ and $b=\sqrt{1+\alpha}$, we note that
\begin{equation}\label{lambda_ab}
\lambda = \frac{1}{\alpha}\,(\sqrt{1-\alpha^2}-1) \qquad \text{and}
\qquad ab = \sqrt{1-\alpha^2}.
\end{equation}
To simplify the expression \eqref{defg} we also observe that
\begin{equation}\label{hhprime}
h(z)=\frac{1}{z+\sqrt{z^2+2\alpha}}=\frac1{2\alpha}(\sqrt{z^2+2\alpha}-z)
\qquad \text{and} \qquad  h'(z)=\frac{-h(z)}{\sqrt{z^2+2\alpha}},
\end{equation}
where we have used the fact that $c^2 = b^2-a^2 = 2\alpha$.
Substituting \eqref{lambda_ab} and \eqref{hhprime} into
\eqref{defg}, and performing some simple algebraic manipulations, we
deduce that
\begin{align*}
(\nabla W_{\alpha}\ast \mu_{\alpha})(z) + z &= \frac12 \sqrt{ z^2+2\alpha} -\frac1{2\alpha} \sqrt{\bar z^2+2\alpha} + \frac{\bar z}2 \Big(z+\frac1{\alpha}\bar z\Big)\frac{1}{\sqrt{\bar z^2+2\alpha}} \nonumber\\
& = \frac{|z^2+2\alpha|+|z^2|-2}{2|z^2+2\alpha|}\,
\sqrt{z^2+2\alpha}.
\end{align*}
Proving that \eqref{EL-2-comp} holds with $a=\sqrt{1-\alpha}$ and
$b=\sqrt{1+\alpha}$ is then equivalent to showing that
\begin{equation}\label{claim_foci}
\frac{|z^2+2\alpha|+|z^2|-2}{2|z^2+2\alpha|} \, \Re\big(\bar
z\sqrt{z^2+2\alpha}\big) \geq 0 \quad \text{for every } z\in
\Omega(\sqrt{1-\alpha}, \sqrt{1+\alpha})^c.
\end{equation}
Now, we recall that $\sqrt{z^2+2\alpha}$ denotes the branch of the
complex square root preserving the quadrants, that is, for every
$z\in \C\setminus [-i\sqrt{2\alpha}, i\sqrt{2\alpha}]$ we have
$\Re(z)\,\Re(\sqrt{z^2+2\alpha})\geq 0$ and
$\Im(z)\,\Im(\sqrt{z^2+2\alpha})\geq 0$. Therefore, we immediately
deduce that $\Re(\bar z \sqrt{z^2+2\alpha})\geq 0$ for every $z\in
\Omega(\sqrt{1-\alpha}, \sqrt{1+\alpha})^c$.

To conclude the proof of the claim \eqref{claim_foci} it remains to
show that $|z^2+2\alpha|+|z^2|-2\geq 0$ in
$\Omega(\sqrt{1-\alpha},\sqrt{1+\alpha})^c$. This is true since
$|z^2+2\alpha|+|z|^2-2$ is a level-set function for the ellipse
$\Omega(\sqrt{1-\alpha},\sqrt{1+\alpha})$. This is a general
statement for ellipses $\Omega(a,b)$ with $b\geq a>0$, that we prove
in Lemma~\ref{lemma:foci} below.

The proof of Theorem~\ref{thm:chara} is thus complete, up to the
computation of the constant $C_\alpha$, that we postpone to
Section~\ref{second-proof-section}.
\end{proof}

\begin{lemma}\label{lemma:foci}
Let $\Gamma=\partial\Omega(a,b)$, with $b\geq a>0$. Then
 \begin{align}
 & |z^2| + |z^2+c^2| = a^2+b^2 \quad \text{ if } z \in \Gamma, \label{ellipse}\\
& |z^2| + |z^2+c^2| \ge  a^2+b^2 \quad \text{ if } z \in
\Omega(a,b)^c, \label{ellipseout}
 \end{align}
where $c^2 = b^2-a^2$.
\end{lemma}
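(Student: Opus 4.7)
My plan is to reduce both \eqref{ellipse} and \eqref{ellipseout} to the classical focal characterisation of the ellipse. The starting point is the factorisation $z^2+c^2=(z-ic)(z+ic)$, which gives $|z^2+c^2|=|z-ic|\,|z+ic|$, while clearly $|z^2|=|z|^2$. Because $c^2=b^2-a^2$ with $b\geq a$, the two points $\pm ic$ are precisely the foci of $\Gamma$. Setting $r_1:=|z-ic|$ and $r_2:=|z+ic|$, the two assertions become
\[
|z|^2 + r_1 r_2 = a^2+b^2 \text{ on } \Gamma, \qquad |z|^2+r_1 r_2\geq a^2+b^2 \text{ on } \Omega(a,b)^c.
\]

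The key algebraic identity I would use is $r_1^2+r_2^2 = 2|z|^2+2c^2$, which is immediate from expanding $|z\pm ic|^2$. Combined with $(r_1+r_2)^2=r_1^2+r_2^2+2r_1r_2$ it yields
\[
r_1 r_2 \;=\; \tfrac12(r_1+r_2)^2 - |z|^2 - c^2.
\]
On $\Gamma$ the focal definition gives $r_1+r_2=2b$, so substitution produces $r_1 r_2 = 2b^2-|z|^2-c^2 = (a^2+b^2)-|z|^2$, which is precisely \eqref{ellipse}.

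For \eqref{ellipseout} I would use that the exterior is characterised by $r_1+r_2\geq 2b$, which follows from the fact that the confocal ellipses with foci $\pm ic$ form a monotone family (the origin lies inside $\Omega(a,b)$ and $r_1+r_2\to\infty$ as $|z|\to\infty$). Then $(r_1+r_2)^2\geq 4b^2$ and the same identity give $r_1 r_2\geq 2b^2-|z|^2-c^2$; adding $|z|^2$ yields the claim. I do not foresee any genuine obstacle here: the whole argument is essentially a reinterpretation of $|z^2|+|z^2+c^2|$ via focal distances of $\Gamma$, and the only modest point to check is the direction of the inequality $r_1+r_2\geq 2b$ outside the ellipse, which is geometrically evident.
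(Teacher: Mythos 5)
Your proof is correct, and it takes a genuinely different route from the paper's. You exploit the focal structure of the \emph{original} ellipse: since $\pm ic$ are the foci of $\Gamma$, the factorisation $|z^2+c^2|=|z-ic|\,|z+ic|=r_1r_2$ and the elementary identity $r_1^2+r_2^2=2|z|^2+2c^2$ reduce \eqref{ellipse} and \eqref{ellipseout} to the single statement that $r_1+r_2=2b$ on $\Gamma$ and $r_1+r_2\ge 2b$ on $\Omega(a,b)^c$. The paper instead rescales to $a+b=2$, writes $a=1-\beta$, $b=1+\beta$, parametrises $\Gamma$ by $z=\zeta-\beta\bar\zeta$ with $|\zeta|=1$, and observes that $z^2=(\zeta^2+\beta^2\bar\zeta^2)-2\beta$ and $z^2+4\beta=(\zeta^2+\beta^2\bar\zeta^2)+2\beta$, where $\zeta^2+\beta^2\bar\zeta^2$ lies on $\partial\Omega(1+\beta^2,1-\beta^2)$, whose foci are $\pm2\beta$; thus the focal-sum property is applied to the \emph{image} ellipse under the squaring map rather than to $\Gamma$ itself. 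Your version is more elementary and self-contained, and it makes the exterior inequality completely transparent, since it only uses the standard fact that $r_1+r_2\ge 2b$ off the closed elliptical region; if you want to nail down that ``modest point'' in one line, note that $z\mapsto|z-ic|+|z+ic|$ is convex, so its sublevel set $\{r_1+r_2\le 2b\}$ is exactly the closed region bounded by $\Gamma$ (alternatively, your monotone confocal family argument works). The paper's route, by contrast, fits naturally with the squaring-map and complex-parametrisation machinery used elsewhere in the article (the quantity $z^2+2\alpha$ in the proof of Theorem~\ref{thm:chara}), but its treatment of \eqref{ellipseout} is only sketched (``in the same way''), implicitly requiring a parametrisation of exterior points by confocal ellipses; your argument avoids that step entirely.
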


\begin{proof}
By dilating $z$ by a factor of $\frac{2}{a+b}$ we can further assume
that $a+b=2$, and write $a= 1-\beta$ and $b= 1+\beta$, for some $0
\le \beta < 1$. Thus $c^2 = (1+\beta)^2 -(1-\beta)^2= 4 \beta$ and
$a^2+b^2= (1-\beta)^2+ (1+\beta)^2 = 2 (1+\beta^2)$, and the claim
becomes
\begin{align}
& |z^2| + |z^2+4\beta| = 2 (1+\beta^2) \quad \text{ if } z \in \Gamma, \label{ellipse1}\\
& |z^2| + |z^2+4\beta| \ge  2 (1+\beta^2) \quad \text{ if } z \in
\Omega(1-\beta,1+\beta)^c. \label{ellipseout1}
 \end{align}
Since  $ \Gamma = \{z= \zeta - \beta \bar{\zeta} : |\zeta |= 1 \}$,
we have
\begin{equation}\label{z2}
z^2 = (\zeta^2 + \beta^2 \bar{\zeta}^2) - 2 \beta
\end{equation}
and
\begin{equation}\label{zc2}
z^2 + 4 \beta = (\zeta^2 + \beta^2 \bar{\zeta}^2) + 2 \beta.
\end{equation}
Now we observe that, whenever $\zeta\in \C$, $|\zeta|=1$, then
$\zeta^2 + \beta^2 \bar{\zeta}^2 \in \partial
\Omega(1+\beta^2,1-\beta^2)$. Hence, since the foci of the ellipse
$\partial \Omega(1+\beta^2,1-\beta^2)$ are $\pm 2 \beta$, we deduce
by \eqref{z2}--\eqref{zc2} that
$$
|z|^2 + |z^2+ 4 \beta| = 2(1+\beta^2),
$$
which completes the proof of \eqref{ellipse1} (and then of
\eqref{ellipse}). The statement \eqref{ellipseout} can be proved in
the same way.
\end{proof}

The limiting case $\alpha=1$ studied in \cite{MRS} can be obtained
from our analysis, valid for $0\leq \alpha<1$, by means of a simple
argument based on $\Gamma$-convergence. As a first step, we note
that $(I_\alpha)_{\alpha\in (0,1)}$ is an increasing family of lower
semicontinuous functionals (with respect to the narrow convergence
of measures). Hence $I_1$ is not only the pointwise limit of
$I_\alpha$ as $\alpha\to 1^-$, but also the $\Gamma$-limit, namely
$$
\Gamma\text{-}\!\!\!\lim_{\alpha\to 1^-} I_\alpha = I_1,
$$
see, e.g., \cite[Proposition 5.4]{DM}. Let now $\mu_\alpha$ and
$\mu_1$ be the measures defined in \eqref{eq:sc-intro} and in
\eqref{semicirclaw}, respectively. Since $\mu_\alpha$ is a minimiser
of $I_\alpha$ for every $\alpha\in (0,1)$, and since
$\mu_\alpha\weakto \mu_1$ narrowly as $\alpha\to 1^-$, the
Fundamental Theorem of $\Gamma$-convergence implies that $\mu_1$ is
a minimiser of $I_1$. It is in fact the unique minimiser, by the
strict convexity of $I_1$.

\begin{cor}\label{cor-alpha=1}
The unique minimiser of $I_1$ is given by the semi-circle law
$$
\mu_{1} :=  \frac{1}{\pi}\delta_0\otimes \sqrt{2-x_2^2} \,
\mathcal{H}^1\mres(-\sqrt2,\sqrt2)\,.
$$
\end{cor}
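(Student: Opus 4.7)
The approach is the one sketched in the paragraph preceding the corollary: apply the Fundamental Theorem of $\Gamma$-convergence along the sequence $\alpha \to 1^-$, using the explicit minimisers $\mu_\alpha$ provided by Theorem~\ref{thm:chara}. The plan has three ingredients: monotonicity and $\Gamma$-convergence of the family $(I_\alpha)$, the narrow convergence $\mu_\alpha \weakto \mu_1$, and uniqueness via strict convexity.

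Monotonicity is immediate from the identity $W_\beta - W_\alpha = (\beta-\alpha)\,x_1^2/|x|^2$, valid for $0 \le \alpha \le \beta$, which gives $I_\alpha \le I_\beta$ pointwise on $\mathcal P(\R^2)$. Since each $I_\alpha$ is lower semicontinuous with respect to narrow convergence (inherited from lower semicontinuity of $W_\alpha$ and $|x|^2$ together with Fatou's lemma), I would invoke the classical fact that the pointwise supremum of an increasing family of lower semicontinuous functionals is the $\Gamma$-limit (\cite[Proposition 5.4]{DM}) to conclude
\begin{equation*}
\Gamma\text{-}\!\!\!\lim_{\alpha \to 1^-} I_\alpha \,=\, \sup_{0 \le \alpha < 1} I_\alpha \,=\, I_1.
\end{equation*}

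For the narrow convergence of the minimisers I would fix $\varphi \in C_b(\R^2)$, slice the ellipse $\Omega(\sqrt{1-\alpha},\sqrt{1+\alpha})$ in the $x_1$-direction via Fubini, and obtain
\begin{equation*}
\int_{\R^2} \varphi \, d\mu_\alpha \,=\, \frac{1}{\pi\sqrt{1-\alpha^2}} \int_{-\sqrt{1+\alpha}}^{\sqrt{1+\alpha}} \int_{-w_\alpha(x_2)}^{w_\alpha(x_2)} \varphi(x_1,x_2)\,dx_1\, dx_2,
\end{equation*}
where $w_\alpha(x_2):=\sqrt{(1-\alpha)\bigl(1-x_2^2/(1+\alpha)\bigr)}$. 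Continuity of $\varphi$ gives the pointwise limit $\frac{1}{2w_\alpha(x_2)}\int_{-w_\alpha(x_2)}^{w_\alpha(x_2)} \varphi(x_1,x_2)\,dx_1 \to \varphi(0,x_2)$ as $\alpha\to 1^-$, while the remaining prefactor $2w_\alpha(x_2)/(\pi\sqrt{1-\alpha^2}) = \frac{2}{\pi(1+\alpha)}\sqrt{(1+\alpha)-x_2^2}$ shows that the diverging density and the vanishing slice width balance each other. Dominated convergence (the integrand being bounded by $\|\varphi\|_\infty \cdot \frac{2}{\pi}\sqrt{2-x_2^2}$) then yields $\int \varphi\,d\mu_\alpha \to \frac{1}{\pi}\int_{-\sqrt{2}}^{\sqrt{2}} \varphi(0,x_2)\sqrt{2-x_2^2}\,dx_2 = \int\varphi\,d\mu_1$.

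With the $\Gamma$-convergence and the narrow convergence $\mu_\alpha \weakto \mu_1$ in hand, the Fundamental Theorem of $\Gamma$-convergence yields that $\mu_1$ is a minimiser of $I_1$; uniqueness then follows from the strict convexity of $I_1$ already established in Proposition~\ref{exist+uniq}, which covers the full range $\alpha\in[0,1]$. No step presents a serious obstacle: the only genuine computation is the explicit narrow limit of $\mu_\alpha$, and it is routine once the cancellation between the density blow-up $1/\sqrt{1-\alpha^2}$ and the collapse of the ellipse onto the segment $\{0\}\times[-\sqrt{2},\sqrt{2}]$ is tracked correctly.
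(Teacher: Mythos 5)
Your proposal is correct and follows essentially the same route as the paper: the paper proves the corollary precisely by the monotonicity/$\Gamma$-convergence argument of \cite[Proposition 5.4]{DM}, the narrow convergence $\mu_\alpha\weakto\mu_1$, the Fundamental Theorem of $\Gamma$-convergence, and uniqueness via the strict convexity from Proposition~\ref{exist+uniq}. Your only addition is to spell out the slicing/dominated-convergence computation behind $\mu_\alpha\weakto\mu_1$, which the paper asserts without proof and which you carry out correctly.
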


\end{section}


\begin{section}{Further Comments}\label{second-proof-section}

\subsection{Stationarity of $\mu_\alpha$: an alternative proof.}

Here, we provide an alternative proof of the fact that, for every
$\alpha\in (0,1)$, the ellipse-law $\mu_{\alpha}$ in
\eqref{eq:sc-intro} is a stationary solution of the gradient flow
\eqref{gradflow} associated to \eqref{EL-1-intro}, namely it
satisfies the Euler-Lagrange condition \eqref{EL-1-intro} inside its
support, for some constant $C_\alpha\in\R$. In doing so, we also
compute the exact value of $C_\alpha$ and the minimum value of
$I_\alpha$. Finally, we  explicitly compute the function
$W_{\alpha}\ast\mu_{a,b}$ in the whole of $\mathbb{R}^2$ for a
general ellipse (see Remark~\ref{rmk:pot-computation}). We recall
that $\mu_{a,b}= \frac{1}{\pi ab} \, \chi_{\Omega(a,b)}$, with
$0<a\leq b$, is the (normalised) characteristic function of the
ellipse of semi-axes $a$ and $b$.

The proof we propose in this section uses the explicit expression of
the logarithmic potential of $\mu_{a,b}$, namely $-\log|\cdot|\ast
\mu_{a,b}$, which is well-known in the literature, in the context of
fluid mechanics. This potential represents the stream function
associated to the vorticity corresponding to an elliptic vortex
patch (the Kirchhoff ellipse) rotating with constant angular
velocity about its centre, and it was computed in order to prove
that the Kirchhoff ellipses are V-states of the Euler equations in
two dimensions \cite{Kirchoff,FP,MR,Joans}.

The \textit{explicit} expression of the logarithmic potential for
any ellipse
\begin{equation}\label{Phi_ellipse}
\Phi_{a,b}:= -\log|\cdot|\ast \mu_{a,b}
\end{equation}
is well-known and is given by
\begin{equation*}
\Phi_{a,b}(x)=
\begin{cases}
\medskip
\displaystyle -\frac{1}{ab}\, \frac{bx_1^2 + ax_2^2}{a+b} - \log\Big(\frac{a+b}2\Big)+\frac12 \quad &\text{if } x \in \Omega(a,b),\\
H(x) \quad &\text{if } x \in \Omega(a,b)^c,
\end{cases}
\end{equation*}
where the function $H$ is defined as
$$
H(x)\equiv H(z)=\begin{cases}
\medskip
\displaystyle 
- \log |z| \quad &\text{if } a=b,\smallskip\\
\displaystyle -\frac 1{c^2}\Re \big(z\sqrt{z^2+c^2}-z^2\big)-
\log|\sqrt{z^2+c^2}+z| +\log 2+\frac 12
\quad &\text{if } a<b.
\end{cases}
$$
We note that $H$ is real-valued, $H(z)=H(\bar{z})$, and that
\begin{equation}\label{nablaH0}
\nabla H(x)\equiv 2 \bar \partial H(\bar z)=-\frac 1{\bar z}
\end{equation}
if $a=b$, whereas
\begin{equation}\label{nablaH}
\nabla H(x)\equiv 2 \bar \partial H(\bar z)=-2h(\bar z)=-\frac
2{c^2}(\sqrt{\bar z^2+c^2}-\bar z)
\end{equation}
for $a<b$.

Note that $\Phi_{a,b}$ is only one part (the radial component) of
the convolution potential $W_\alpha\ast \mu_{a,b}$. We now show that
the anisotropic part of $W_\alpha\ast \mu_{a,b}$ can be obtained
from $\Phi_{a,b}$ by means of an ingenious differentiation. We first
write \eqref{Phi_ellipse} explicitly, for $x\in \Omega(a,b)$ and
$0<a<b$:
\begin{equation}\label{Phiabx}
\Phi_{a,b}(x) = -\frac{1}{\pi ab} \int_{\Omega(a,b)}\log|x-y| \,dy =
-\frac{1}{ab}\, \frac{bx_1^2 + ax_2^2}{a+b} -
\log\Big(\frac{a+b}2\Big)+\frac12.
\end{equation}
We perform a change of variables in order to write the integral in
the expression above as an integral on the fixed domain $B_1(0)$,
the unit disc. In terms of the new variables $u=(u_1,u_2) :=
\left(\frac{x_1}{a}, \frac{x_2}{b}\right)$, $v=(v_1,v_2) :=
\left(\frac{y_1}{a}, \frac{y_2}{b}\right)$, and the aspect ratio
$k:=a/b$, $k\in (0,1)$, the expression in \eqref{Phiabx} becomes
\begin{equation}\label{Phi-ball}
-\log b - \frac1{2\pi}
\int_{B_1(0)}\log\big(k^2(u_1-v_1)^2+(u_2-v_2)^2\big)\,dv =
-\frac{ku_1^2+u_2^2}{1+k} - \log\frac b2 - \log(1+k) +\frac12.
\end{equation}
By differentiating the previous expression \eqref{Phi-ball} with
respect to the aspect ratio $k$ we obtain the identity
\begin{equation*}
\frac1\pi
\int_{B_1(0)}\frac{k(u_1-v_1)^2}{k^2(u_1-v_1)^2+(u_2-v_2)^2}\,dv =
\frac{u_1^2 -u_2^2}{(1+k)^2} + \frac{1}{1+k},\quad k\in (0,1),
\end{equation*}
which, expressed in the original variables $x$ and $y$, and $a,b$,
becomes
\begin{equation}\label{Phi-prime}
\frac1{\pi ab} \int_{\Omega(a,b)}\frac{(x_1-y_1)^2}{|x-y|^2}\,dy =
\frac{a}{a+b} + \frac{b^2x_1^2 - a^2 x_2^2}{ab(a+b)^2}.
\end{equation}
Note that the left-hand side of \eqref{Phi-prime} is exactly the
convolution of the anisotropic term of the potential $W_\alpha$ with
the measure $\mu_{a,b}$. This allows us to compute the whole
convolution potential $W_\alpha\ast \mu_{a,b}$ on $\Omega(a,b)$:
\begin{align}\label{Vastmu:m}
(W_\alpha\ast \mu_{a,b})(x) &= \frac{1}{\pi ab}
\int_{\Omega(a,b)}\Big(-\log|x-y|
+\alpha \frac{(x_1-y_1)^2}{|x-y|^2}\Big)\, dy \nonumber \\
& = -\frac{1}{ab}\, \frac{bx_1^2 + ax_2^2}{a+b} - \log\Big(\frac{a+b}2\Big)+\frac12 + \alpha\, \frac{a}{a+b} + \alpha\,\frac{b^2x_1^2 - a^2 x_2^2}{ab(a+b)^2} \nonumber\\
& = \frac{-a-b+\alpha \,b}{a(a+b)^2}\, x_1^2 -  \frac{a+b+\alpha
\,a}{b(a+b)^2}\, x_2^2 - \log\Big(\frac{a+b}2\Big)+\frac12 +
\alpha\, \frac{a}{a+b}.
\end{align}
Then we can evaluate the value of the energy $I_\alpha$ on ellipses
$\mu_{a,b}$, namely
\begin{align}
I_\alpha(\mu_{a,b}) &= \frac{1}{2\pi ab} \int_{\Omega(a,b)} (W_\alpha\ast \mu_{a,b})(x) \,d x +  \frac1{2 \pi ab}\int_{\Omega(a,b)} (x_1^2+ x_2^2) \,dx\nonumber\\
&= \frac{1}{2\pi ab}\Big(1+\frac{-a-b+\alpha \,b}{a(a+b)^2}\Big) \int_{\Omega(a,b)} x_1^2 \,dx + \frac{1}{2\pi ab}\Big(1 -  \frac{a+b+\alpha \,a}{b(a+b)^2}\Big)  \int_{\Omega(a,b)} x_2^2 \,dx \nonumber\\
&\quad - \frac12\log\Big(\frac{a+b}2\Big)+\frac14 + \frac{\alpha}2\,
\frac{a}{a+b}, \label{energy:ellipse}
\end{align}
where
\begin{equation}\label{confinement:e}
\frac1{\pi ab}\int_{\Omega(a,b)} x_1^2 \,dx = \frac{a^2}4 \qquad
\text{and} \qquad  \frac1{\pi ab}\int_{\Omega(a,b)} x_2^2 \,dx =
\frac{b^2}4.
\end{equation}

In the special case of $a=\sqrt{1-\alpha}$ and $b=\sqrt{1+\alpha}$
we have that
$$
\frac{-a-b+\alpha \,b}{a(a+b)^2} = -\frac{a+b+\alpha \,a}{b(a+b)^2}
= - \frac12,
$$
so that by \eqref{Vastmu:m} we conclude that $(W_\alpha\ast
\mu_\alpha)(x) =  -\frac12 |x|^2 + C_\alpha$ for every
$x\in\Omega(\sqrt{1-\alpha},\sqrt{1+\alpha})$ with
$$
C_\alpha= -
\log\Big(\frac{\sqrt{1-\alpha}+\sqrt{1+\alpha}}2\Big)+\frac12 +
\alpha\, \frac{\sqrt{1-\alpha}}{\sqrt{1-\alpha}+\sqrt{1+\alpha}}=
2I_\alpha(\mu_\alpha)- \frac12\int_{\mathbb{R}^2} |x|^2
\,d\mu_\alpha(x).
$$
In particular, by \eqref{energy:ellipse} and \eqref{confinement:e}
we obtain the minimum value $I_\alpha(\mu_\alpha)$ of the energy
$I_\alpha$, that is,
$$
I_\alpha(\mu_\alpha) = \frac38 -\frac12
\log\Big(\frac{\sqrt{1-\alpha}+\sqrt{1+\alpha}}2\Big) +
\frac{\alpha}2\,
\frac{\sqrt{1-\alpha}}{\sqrt{1-\alpha}+\sqrt{1+\alpha}}.
$$

\begin{remark}[Computation of $W_\alpha\ast \mu_{a,b}$]\label{rmk:pot-computation}
For completeness, we can compute, for any $b\geq a>0$ and any $x\in
\mathbb{R}^2$, the value of $(W_{\alpha}\ast \mu_{a,b})(x)$.
Equation \eqref{Vastmu:m} gives
$$
(W_\alpha\ast \mu_{a,b})(x) = \frac{-a-b+\alpha \,b}{a(a+b)^2}\,
x_1^2 -  \frac{a+b+\alpha \,a}{b(a+b)^2}\, x_2^2 -
\log\Big(\frac{a+b}2\Big)+\frac12 + \alpha\, \frac{a}{a+b}
$$
for any $x\in \Omega(a,b)$. Outside the ellipse, it is again
convenient to pass to complex variables. Integrating
\eqref{gradoutside} with respect to $\bar{z}$, and recalling
\eqref{nablaH0} and \eqref{nablaH}, we can show that
$$
(W_\alpha\ast \mu_{a,b})(z) = H(z)+\alpha \Re\big(h(z)\bar{z}-ab
h(\bar{z})^2-\lambda h(\bar{z})\bar{z}\big)+\alpha \frac a{a+b}
$$
for any $z\in \Omega(a,b)^c$.
\end{remark}

\subsection{More general anisotropy}
Now we briefly discuss the case of a more general anisotropy of the
type
\begin{equation}\label{anisotropy-gen}
V_{\alpha,\beta,\gamma}(x_1,x_2) := \frac{\alpha x_1^2+\beta
x_2^2+\gamma x_1x_2}{x_1^2+x_2^2},
\end{equation}
where $\alpha,\beta,\gamma\in\R$. Let $W_\alpha^\gamma$ be the
kernel defined as
$$
W_{\alpha,\beta,\gamma}(x_1,x_2):=-\log|x| + \frac{\alpha
x_1^2+\beta x_2^2+\gamma x_1x_2}{x_1^2+x_2^2},
$$
and let $I_{\alpha,\beta,\gamma}$ be the corresponding energy,
defined on probability measures $\mu \in \P(\R^2)$ as
$$
I_{\alpha,\beta,\gamma}(\mu):= \frac12 \iint_{\mathbb{R}^2\times
\mathbb{R}^2} W_{\alpha,\beta,\gamma}(x-y) \,d\mu(x) \,d\mu(y) +
\frac12\int_{\mathbb{R}^2} |x|^2 \,d\mu(x).
$$

If $\gamma=0$, the anisotropy \eqref{anisotropy-gen} can be written
as
$$
V_{\alpha,\beta,0}(x_1,x_2)= (\alpha-\beta)\,
\frac{x_1^2}{x_1^2+x_2^2} +\beta,
$$
so that the study of minimisers of $I_{\alpha,\beta,0}$ is covered
by the previous analysis.

Assume $\gamma\neq0$. Consider the rotation in the plane defined by
$$
y=\frac{1}{\sqrt{a^2+\gamma^2}} \left(\begin{matrix} -a & \gamma
\\
-\gamma & -a
\end{matrix}\right) x
$$
where $a:=\beta-\alpha - \sqrt{(\beta-\alpha)^2+\gamma^2}$. Setting
$b:=\sqrt{(\beta-\alpha)^2+\gamma^2}$, a simple computation shows
that
$$
V_{\alpha,\beta,\gamma}(x_1,x_2) =  b\, \frac{y_1^2}{y_1^2+y_2^2} +
\beta -\frac{b\,\gamma^2}{a^2+\gamma^2},
$$
so that, up to this change of variables, the study of the minimality
of $I_{\alpha,\beta,\gamma}$ reduces again to the original case. In
particular, for $b<1$ the minimiser is an ellipse with major axis
along the line $y_1=0$, that is, $-ax_1+\gamma x_2=0$, while for
$b\geq1$ the minimiser is the semi-circle law on that line.

The two orthogonal lines $y_1=0$ and $y_2=0$ are the zero set of the
anisotropic force $F_{\alpha,\beta,\gamma}$, given by
\begin{equation}\label{force_a}
F_{\alpha,\beta,\gamma}(x) = - \nabla V_{\alpha,\beta,\gamma}(x) =
\frac{\gamma x_1^2- \gamma x_2^2+2(\beta-\alpha) x_1
x_2}{(x_1^2+x_2^2)^2} \, x^\perp,
\end{equation}
where $x^\perp=(x_2,-x_1)$. The force $F_{\alpha,\beta,\gamma}$ is
perpendicular to the radial direction, and it is indeed zero only
when
$$
x_2 = \frac1\gamma \big(\beta-\alpha \pm
\sqrt{(\beta-\alpha)^2+\gamma^2}\big) x_1,
$$
which correspond to $y_1=0$ and $y_2=0$. Looking at the sign of the
force in \eqref{force_a} it is clear that $F_{\alpha,\beta,\gamma}$
points towards the line $y_1=0$.

\end{section}

\bigskip\bigskip

\noindent \textbf{Acknowledgements.} JAC was partially supported by
the Royal Society via a Wolfson Research Merit Award and by
the EPSRC under the Grant EP/P031587/1. MGM and LR are partly
supported by GNAMPA--INdAM. MGM acknowledges support by the European
Research Council under Grant No.\ 290888. LR acknowledges support by
the Universit\`a di Trieste through FRA~2016. LS acknowledges
support by the EPSRC under the Grant EP/N035631/1. JM and JV
acknowledge support by the Spanish projects MTM2013-44699 (MINECO)
and MTM2016-75390 (MINECO), 2014SGR75 (Generalitat de Catalunya) and
FP7-607647 (European Union).

\bigskip


\begin{thebibliography}{10}

\bibitem{ABCV}
G.~Albi, D.~Balagu{\'e}, J.~A. Carrillo, and J.~von Brecht.
\newblock Stability analysis of flock and mill rings for second order models in
  swarming.
\newblock {\em SIAM J. Appl. Math.}, 74:794--818, 2014.

\bibitem{AmGiSa05}
L.~Ambrosio, N.~Gigli, and G.~Savar\'e.
\newblock {\em Gradient flows in metric spaces and in the space of probability
  measures}.
\newblock Birkh\"auser, Basel, 2005.

\bibitem{BCLR}
D.~Balagu\'e, J.~A. Carrillo, T.~Laurent, and G.~Raoul.
\newblock Dimensionality of {L}ocal {M}inimizers of the {I}nteraction {E}nergy.
\newblock {\em Arch. Ration. Mech. Anal.}, 209:1055--1088, 2013.

\bibitem{ring2}
A.~L. Bertozzi, T.~Kolokolnikov, H.~Sun, D.~Uminsky, and J.~von Brecht.
\newblock Ring patterns and their bifurcations in a nonlocal model of
  biological swarms.
\newblock {\em Commun. Math. Sci.}, 13:955--985, 2015.

\bibitem{BC}
A.~Blanchet and G.~Carlier.
\newblock From {N}ash to {C}ournot-{N}ash equilibria via the
  {M}onge-{K}antorovich problem.
\newblock {\em Philos. Trans. R. Soc. Lond. Ser. A Math. Phys. Eng. Sci.},
  372:20130398, 11pp., 2014.

\bibitem{BK}
H.~Br\'ezis and D.~Kinderlehrer.
\newblock The smoothness of solutions to nonlinear variational inequalities.
\newblock {\em Indiana Univ. Math. J.}, 23:831--844, 1974.

\bibitem{B}
J.~Burbea.
\newblock Motions of vortex patches.
\newblock {\em Lett. Math. Phys.}, 6:1--16, 1982.

\bibitem{Ca}
L.~A. Caffarelli.
\newblock The obstacle problem revisited.
\newblock {\em J. Fourier Anal. Appl.}, 4:383--402, 1998.

\bibitem{CF}
L.~A. Caffarelli and A.~Friedman.
\newblock A singular perturbation problem for semiconductors.
\newblock {\em Boll. Un. Mat. Ital. B (7)}, 1:409--421, 1987.

\bibitem{CCP}
J.~A. Ca{\~n}izo, J.~A. Carrillo, and F.~S. Patacchini.
\newblock Existence of compactly supported global minimisers for the
  interaction energy.
\newblock {\em Arch. Ration. Mech. Anal.}, 217:1197--1217, 2015.

\bibitem{CCV}
J.~A. Carrillo, D.~Castorina, and B.~Volzone.
\newblock Ground states for diffusion dominated free energies with logarithmic
  interaction.
\newblock {\em SIAM J. Math. Anal.}, 47:1--25, 2015.

\bibitem{CDM}
J.~A. Carrillo, M.~G. Delgadino, and A.~Mellet.
\newblock Regularity of {L}ocal {M}inimizers of the {I}nteraction {E}nergy
  {V}ia {O}bstacle {P}roblems.
\newblock {\em Comm. Math. Phys.}, 343:747--781, 2016.

\bibitem{CFP}
J.~A. Carrillo, A.~Figalli, and F.~S. Patacchini.
\newblock Geometry of minimizers for the interaction energy with mildly
  repulsive potentials.
\newblock {\em {T}o appear in Ann. Inst. H. Poincar\'e Anal. Nonlin.}, 2016.

\bibitem{CH}
J.~A. Carrillo and Y.~Huang.
\newblock Explicit equilibrium solutions for the aggregation equation with
  power-law potentials.
\newblock {\em {T}o appear in Kin. Rel. Mod.}, 2016.

\bibitem{CaMcCVi03}
J.~A. Carrillo, R.~J. McCann, and C.~Villani.
\newblock Kinetic equilibration rates for granular media and related equations:
  entropy dissipation and mass transportation estimates.
\newblock {\em Rev. Mat. Iberoamericana}, 19:971--1018, 2003.

\bibitem{CV}
J.~A. Carrillo and J.~L. V{\'a}zquez.
\newblock Some free boundary problems involving non-local diffusion and
  aggregation.
\newblock {\em Philos. Trans. R. Soc. Lond. Ser. A Math. Phys. Eng. Sci.},
  373:20140275, 16pp., 2015.

\bibitem{DM}
G.~Dal~Maso.
\newblock {\em An introduction to $\Gamma$-convergence}.
\newblock Birkh\"auser, Boston, 1993.

\bibitem{d2006self}
M.~R. D'{O}rsogna, Y.-L. Chuang, A.~L. Bertozzi, and L.~S. Chayes.
\newblock Self-propelled particles with soft-core interactions: patterns,
  stability, and collapse.
\newblock {\em Phys. Rev. Lett.}, 96:104302, 4pp., 2006.

\bibitem{FP}
G.~R. Flierl and L.~M. Polvani.
\newblock Generalized {K}irchhoff vortices.
\newblock {\em Phys. Fluids}, 29:2376--2379, 1986.

\bibitem{Fro}
O.~Frostman.
\newblock Potentiel d'\'equilibre et capacit\'e des ensembles avec quelques
  applications \`a la th\'eorie des fonctions.
\newblock {\em Meddel. Lunds Univ. Mat. Sem.}, 3:1--118, 1935.

\bibitem{GPPS1}
M.~G.~D. Geers, R.~H.~J. Peerlings, M.~A. Peletier, and L.~Scardia.
\newblock Asymptotic behaviour of a pile-up of infinite walls of edge
  dislocations.
\newblock {\em Arch. Ration. Mech. Anal.}, 209:495--539, 2013.

\bibitem{Joans}
T.~Hmidi, J.~Mateu, and J.~Verdera.
\newblock On rotating doubly connected vortices.
\newblock {\em J. Differential Equations}, 258:1395--1429, 2015.

\bibitem{HP}
D.~D. Holm and V.~Putkaradze.
\newblock Formation of clumps and patches in self-aggregation of finite-size
  particles.
\newblock {\em Phys. D}, 220:183--196, 2006.

\bibitem{Kirchoff}
G.~Kirchhoff.
\newblock {\em Vorlesungen \"uber mathematische Physik}.
\newblock Teubner, Leipzig, 1874.

\bibitem{ring1}
T.~Kolokolnikov, H.~Sun, D.~Uminsky, and A.~L. Bertozzi.
\newblock Stability of ring patterns arising from two-dimensional particle
  interactions.
\newblock {\em Phys. Rev. E}, 84:015203, 4pp., 2011.

\bibitem{MR}
T.~B. Mitchell and L.~F. Rossi.
\newblock The evolution of {K}irchhoff elliptic vortices.
\newblock {\em Phys. Fluids}, 20:054103, 12pp., 2008.

\bibitem{mogilner1999non}
A.~Mogilner and L.~Edelstein-Keshet.
\newblock A non-local model for a swarm.
\newblock {\em J. Math. Biol.}, 38:534--570, 1999.

\bibitem{MPS}
M.~G. Mora, M.~Peletier, and L.~Scardia.
\newblock Convergence of interaction-driven evolutions of dislocations with
  wasserstein dissipation and slip-plane confinement.
\newblock {\em Preprint arXiv:1409.4236}, 2014.

\bibitem{MRS}
M.~G. Mora, L.~Rondi, and L.~Scardia.
\newblock The equilibrium measure for a nonlocal dislocation energy.
\newblock {\em Preprint arXiv:1612.01137}, 2016.

\bibitem{Otto}
F.~Otto.
\newblock The geometry of dissipative evolution equations: the porous medium
  equation.
\newblock {\em Comm. Partial Differential Equations}, 26:101--174, 2001.

\bibitem{SaTo}
E.~B. Saff and V.~Totik.
\newblock {\em Logarithmic potentials with external fields}.
\newblock Springer-Verlag, Berlin, 1997.

\bibitem{SST}
R.~Simione, D.~Slep{\v{c}}ev, and I.~Topaloglu.
\newblock Existence of ground states of nonlocal-interaction energies.
\newblock {\em J. Stat. Phys.}, 159:972--986, 2015.

\bibitem{Stein}
E.~M. Stein.
\newblock {\em Singular integrals and differentiability properties of
  functions}.
\newblock Princeton University Press, Princeton, 1970.

\bibitem{TBL}
C.~M. Topaz, A.~L. Bertozzi, and M.~A. Lewis.
\newblock A nonlocal continuum model for biological aggregation.
\newblock {\em Bull. Math. Biol.}, 68:1601--1623, 2006.

\bibitem{Tos}
G.~Toscani.
\newblock One-dimensional kinetic models of granular flows.
\newblock {\em M2AN Math. Model. Numer. Anal.}, 34:1277--1291, 2000.

\bibitem{villani}
C.~Villani.
\newblock {\em Topics in optimal transportation}.
\newblock American Mathematical Society, Providence, 2003.

\end{thebibliography}

\end{document}